\numberwithin{equation}{section}
\numberwithin{figure}{section}
\theoremstyle{plain}
\newtheorem{thm}{\protect\theoremname}[section]
\theoremstyle{plain}
\newtheorem{prop}[thm]{\protect\propositionname}
\theoremstyle{definition}
\newtheorem{example}[thm]{\protect\examplename}
\newlist{casenv}{enumerate}{4}
\setlist[casenv]{leftmargin=*,align=left,widest={iiii}}
\setlist[casenv,1]{label={{\itshape\ \casename} \arabic*.},ref=\arabic*}
\setlist[casenv,2]{label={{\itshape\ \casename} \roman*.},ref=\roman*}
\setlist[casenv,3]{label={{\itshape\ \casename\ \alph*.}},ref=\alph*}
\setlist[casenv,4]{label={{\itshape\ \casename} \arabic*.},ref=\arabic*}
\theoremstyle{plain}
\newtheorem{lem}[thm]{\protect\lemmaname}
\theoremstyle{plain}
\newtheorem{cor}[thm]{\protect\corollaryname}
\theoremstyle{remark}
\newtheorem{rem}[thm]{\protect\remarkname}
\theoremstyle{definition}
\newtheorem{problem}[thm]{\protect\problemname}
\global\long\def\ns#1{\prescript{\ast}{}{#1}}
\newcommand{\FIN}{\operatorname{FIN}}
\newcommand{\INF}{\operatorname{INF}}
\providecommand{\casename}{Case}
\providecommand{\corollaryname}{Corollary}
\providecommand{\examplename}{Example}
\providecommand{\lemmaname}{Lemma}
\providecommand{\problemname}{Problem}
\providecommand{\propositionname}{Proposition}
\providecommand{\remarkname}{Remark}
\providecommand{\theoremname}{Theorem}
\begin{document}
\title{A \lowercase{nonstandard invariant of coarse spaces}}
\author{Takuma Imamura}
\address{Research Institute for Mathematical Sciences\\
Kyoto University\\
Kitashirakawa Oiwake-cho, Sakyo-ku, Kyoto 606-8502, Japan}
\email{timamura@kurims.kyoto-u.ac.jp}
\begin{abstract}
We construct a set-valued invariant $\iota\left(X,\xi\right)$ of
pointed coarse spaces $\left(X,\xi\right)$ by using nonstandard analysis.
The invariance under coarse equivalence is established. A sufficient
condition for the invariant to be of cardinality $\leq1$ is provided.
Miller \emph{et al.} \cite{MSM10} and subsequent researchers have
introduced a similar but standard set-valued coarse invariant $\sigma\left(X,\xi\right)$
of pointed metric spaces $\left(X,\xi\right)$. In order to compare
these two invariants, we construct a natural transformation $\omega_{\left(X,\xi\right)}$
from $\sigma\left(X,\xi\right)$ to $\iota\left(X,\xi\right)$. The
surjectivity of $\omega_{\left(X,\xi\right)}$ is proved for all proper
geodesic spaces $\left(X,\xi\right)$.
\end{abstract}

\subjclass[2000]{Primary: 54J05; Secondary: 40A05.}
\maketitle

\section{Introduction}

Small-scale topology focuses on \emph{fine} structures of spaces and
maps, such as openness, continuity, and convergence. In contrast,
large-scale (coarse) topology does not care about fine structures,
and instead focuses on \emph{coarse} or \emph{asymptotic} structures
of spaces and maps, such as boundedness, bornologicity, and divergence.
Large-scale concepts appear in many fields of mathematics, e.g., functional
analysis \cite{Bou07,HN77}, geometric group theory \cite{Bow06,Roe03},
and infinite combinatorics \cite{PB03,PZ07}. Roe \cite{Roe03} developed
a general and unified framework for large-scale topology, called coarse
spaces. A \emph{coarse structure} on a set $X$ is a family $\mathcal{C}_{X}$
of binary relations on $X$ which satisfies the following conditions:
\begin{enumerate}
\item $\Delta_{X}:=\set{\left(x,x\right)|x\in X}\in\mathcal{C}_{X}$;
\item $E\subseteq F\in\mathcal{C}_{X}\implies E\in\mathcal{C}_{X}$;
\item $E,F\in\mathcal{C}_{X}\implies E\cup F\in\mathcal{C}_{X}$;
\item $E,F\in\mathcal{C}_{X}\implies E\circ F\in\mathcal{C}_{X}$;
\item $E\in\mathcal{C}_{X}\implies E^{-1}\in\mathcal{C}_{X}$.
\end{enumerate}
A set equipped with a coarse structure is called a \emph{coarse space}.
For example, given a pseudometric metric $d_{X}$ on a set $X$, the
family of all binary relations $E$ on $X$ with $\sup_{\left(x,y\right)\in E}d_{X}\left(x,y\right)<\infty$
is a coarse structure on $X$, called the \emph{bounded coarse structure}.

Miller \emph{et al.} \cite{MSM10} introduced a set-valued invariant
$\sigma\left(X,\xi\right)$ of pointed metric spaces $\left(X,\xi\right)$
with a certain property (called $\sigma$-stability). Fox \emph{et
al.} \cite{FLL11} proved that $\sigma\left(X,\xi\right)$ is invariant
under coarse equivalence. DeLyser \emph{et al.} \cite{DLW11} extended
it to general pointed metric spaces via the direct limit construction.
The definition of $\sigma\left(X,\xi\right)$ can be obviously extended
to general pointed coarse spaces. Roughly speaking, $\sigma\left(X,\xi\right)$
counts \emph{the ways to tend to infinity from $\xi$} up to a certain
equivalence. Analogously, it is natural to consider the number of\emph{
ideal points infinitely far away from $\xi$} up to some equivalence.

In this paper, we employ Robinson's NonStandard Analysis (NSA) to
realise the above idea. NSA is a powerful framework for finding and
proving theorems, refactoring known theories, constructing mathematical
objects, and crystallising intuitive ideas, based on the existence
of saturated models of set theory. The basic strategy of NSA is to
enrich the mathematical world by adding ideal entities such as infinitesimals
(infinitely small quantities). We refer to \cite{Dav05,LW15,Rob66}
for NSA and \cite{CK90,KR04} for the foundations of NSA.

This paper is organised as follows. In \prettyref{sec:Preliminaries},
we review the treatment of coarse spaces by means of nonstandard analysis.
In \prettyref{sec:Invariant-iota}, we construct a set-valued coarse
invariant $\iota\left(X,\xi\right)$ for each pointed coarse space
$\left(X,\xi\right)$. We consider an asymptotic property of pointed
coarse spaces, called ``non-scattering at infinity'', and show that
it is a sufficient condition for $\iota\left(X,\xi\right)$ to be
of cardinality $\leq1$. In \prettyref{sec:Comparison-with-sigma},
we discuss the relationship between $\sigma\left(X,\xi\right)$ and
$\iota\left(X,\xi\right)$ by considering a natural transformation
$\omega\colon\sigma\to\iota$. The surjectivity of $\omega_{\left(X,\xi\right)}\colon\sigma\left(X,\xi\right)\to\iota\left(X,\xi\right)$
is proved for all proper geodesic spaces $\left(X,\xi\right)$. Finally,
in \prettyref{sec:Some-open-problems}, we pose some open problems.
For instance, it is open whether the map $\omega_{\left(X,\xi\right)}$
can be non-injective.

\section{\label{sec:Preliminaries}Preliminaries}

In this section, we recall basic definitions and results in nonstandard
large-scale topology following \cite{Ima19}.

Let $\left(X,\mathcal{C}_{X}\right)$ be a coarse space. Consider
the nonstandard extension $\left(\ns{X},\ns{\mathcal{C}_{X}}\right)$.
We say that two points $x,y\in\ns{X}$ are \emph{finitely close} (and
denote by $x\sim_{X}y$) if $\left(x,y\right)\in\ns{E}$ for some
$E\in\mathcal{C}_{X}$. In other words, the \emph{finite closeness
relation} $\sim_{X}$ is a binary relation on $\ns{X}$ defined as
the union $\bigcup_{E\in\mathcal{C}_{X}}\ns{E}$. For $\xi\in X$
let $\FIN\left(X,\xi\right):=\set{x\in\ns{X}|x\sim_{X}\xi}$. For
example, if $\mathcal{C}_{X}$ is induced by a pseudometric $d_{X}$,
then $x\sim_{X}y$ if and only if $\ns{d}_{X}\left(x,y\right)$ is
finite (in the sense of nonstandard analysis).

We then obtain the following nonstandard characterisations.
\begin{prop}[{\cite[Proposition 3.4]{Ima19}}]
\label{prop:control}A binary relation $E$ on a coarse space $X$
is controlled (i.e. $\in\mathcal{C}_{X}$) if and only if $\ns{E}\subseteq{\sim_{X}}$.
\end{prop}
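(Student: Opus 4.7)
The plan is to handle the two directions separately. The forward direction is immediate from the very definition of $\sim_{X}$: if $E\in\mathcal{C}_{X}$, then $\ns{E}$ is one of the internal sets whose union forms $\sim_{X}$, so $\ns{E}\subseteq\bigcup_{F\in\mathcal{C}_{X}}\ns{F}={\sim_{X}}$.

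For the converse I would argue by contrapositive, exploiting saturation. Suppose $E\notin\mathcal{C}_{X}$. Because coarse structures are closed under taking subsets, this forces $E\not\subseteq F$, i.e.\ $E\setminus F\neq\emptyset$, for every $F\in\mathcal{C}_{X}$. The family $\set{E\setminus F|F\in\mathcal{C}_{X}}$ then has the finite intersection property, since $\mathcal{C}_{X}$ is closed under finite unions and $(E\setminus F_{1})\cap\cdots\cap(E\setminus F_{n})=E\setminus(F_{1}\cup\cdots\cup F_{n})$, the right-hand side being nonempty by the same argument applied to $F_{1}\cup\cdots\cup F_{n}\in\mathcal{C}_{X}$.

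Passing to the nonstandard extension and invoking the saturation hypothesis on $\ns{X}$ (which the paper tacitly assumes, following \cite{Ima19}; polysaturation is more than enough), the intersection $\bigcap_{F\in\mathcal{C}_{X}}\ns{(E\setminus F)}$ is nonempty. Any pair $(x,y)$ in it belongs to $\ns{E}$ but to no $\ns{F}$ with $F\in\mathcal{C}_{X}$, and hence $(x,y)\notin{\sim_{X}}$, contradicting $\ns{E}\subseteq{\sim_{X}}$.

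The only real obstacle is bookkeeping: making precise the level of saturation required (any cardinal exceeding $\left|\mathcal{C}_{X}\right|$ suffices, and a polysaturated enlargement handles all $X$ uniformly) and recalling the elementary closure properties of coarse structures that license the step ``$E\notin\mathcal{C}_{X}\Rightarrow E\not\subseteq F$ for all $F\in\mathcal{C}_{X}$''. No further ingredients appear to be needed.
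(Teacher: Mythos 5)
Your proof is correct, and it rests on the same two ingredients as the paper's --- saturation over the family $\mathcal{C}_{X}$ together with the closure of coarse structures under subsets and finite unions --- but it organizes the converse differently. The paper argues directly: by saturation it produces a single internal $F\in\ns{\mathcal{C}_{X}}$ with ${\sim_{X}}\subseteq F$ (a reusable fact, cited as Lemma 3.3 of \cite{Ima19} and invoked again in the proof of \prettyref{prop:cA-is-mc}), concludes $\ns{E}\subseteq F$, and then applies transfer twice (subset-closure of the coarse structure internally, then downward transfer of $\ns{E}\in\ns{\mathcal{C}_{X}}$) to get $E\in\mathcal{C}_{X}$. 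You instead take the contrapositive and run saturation in the dual direction: the family $\set{E\setminus F|F\in\mathcal{C}_{X}}$ has the finite intersection property because finite unions of controlled sets are controlled, so saturation yields a pair lying in $\ns{E}$ but in no $\ns{F}$, hence outside ${\sim_{X}}$. Your route is marginally more elementary, needing only the finite-intersection form of saturation applied to (differences of) standard sets and no appeal to internal members of $\ns{\mathcal{C}_{X}}$ or to transfer, whereas the paper's route isolates an auxiliary statement it wants elsewhere anyway. The amount of saturation required is identical in the two arguments --- any degree exceeding $\left|\mathcal{C}_{X}\right|$ suffices, and polysaturation covers all cases uniformly --- so your bookkeeping remark is accurate.
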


\begin{proof}
The ``only if'' part is trivial. Suppose $\ns{E}\subseteq{\sim_{X}}$.
By the saturation principle, there exists an $F\in\ns{\mathcal{C}_{X}}$
such that ${\sim_{X}}\subseteq F$ (see also \cite[Lemma 3.3]{Ima19}),
so $\ns{E}\subseteq F$. By the transfer principle, $\ns{E}\in\ns{\mathcal{C}_{X}}$.
Again by transfer, we have $E\in\mathcal{C}_{X}$.
\end{proof}
\begin{prop}[{\cite[Proposition 3.10]{Ima19}}]
\label{prop:bounded}A subset $B$ of a coarse space $X$ is bounded
if and only if $x\sim_{X}y$ for all $x,y\in\ns{B}$.
\end{prop}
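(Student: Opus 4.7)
The plan is to reduce \prettyref{prop:bounded} directly to \prettyref{prop:control} by recognising ``bounded'' as a controlledness statement about the square $B\times B$. Recall that $B\subseteq X$ is bounded precisely when the product relation $B\times B$ belongs to $\mathcal{C}_{X}$, so the target becomes: $B\times B\in\mathcal{C}_{X}$ iff $x\sim_{X}y$ for all $x,y\in\ns{B}$.

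First I would apply \prettyref{prop:control} to the binary relation $E=B\times B$. This gives that $B\times B$ is controlled if and only if $\ns{(B\times B)}\subseteq{\sim_{X}}$. Next I would invoke the transfer principle to identify $\ns{(B\times B)}=\ns{B}\times\ns{B}$, which is a routine fact about nonstandard extensions of Cartesian products. Combining these two observations yields the stated equivalence, since $\ns{B}\times\ns{B}\subseteq{\sim_{X}}$ is literally the assertion that $x\sim_{X}y$ for every $x,y\in\ns{B}$. The degenerate case $B=\emptyset$ is vacuously fine on both sides, since $\ns{\emptyset}=\emptyset$.

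There is essentially no obstacle here: once \prettyref{prop:control} is available, \prettyref{prop:bounded} is a one-line corollary obtained by specialising to a product relation. The only point to verify is the nonstandard identity $\ns{(B\times B)}=\ns{B}\times\ns{B}$, which is immediate from transfer applied to the first-order statement $\forall x,y\,((x,y)\in B\times B\Leftrightarrow x\in B\wedge y\in B)$. No saturation or additional structural argument is needed beyond what is already invoked in the proof of \prettyref{prop:control}.
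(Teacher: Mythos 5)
Your proposal is correct and follows essentially the same route as the paper: both identify boundedness of $B$ with $B\times B\in\mathcal{C}_{X}$, use transfer to get $\ns{\left(B\times B\right)}=\ns{B}\times\ns{B}$, and invoke \prettyref{prop:control} to convert the inclusion $\ns{B}\times\ns{B}\subseteq{\sim_{X}}$ into controlledness (the paper only cites \prettyref{prop:control} explicitly in the ``if'' direction, handling the ``only if'' direction directly from the definition of $\sim_{X}$, but this is the same argument packaged slightly differently).
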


\begin{proof}
Only if: $B$ is bounded, i.e. $B\times B\in\mathcal{C}_{X}$. Then
$\ns{B}\times\ns{B}=\ns{\left(B\times B\right)}\subseteq{\sim_{X}}$,
where the equality follows from transfer. Hence $x\sim_{X}y$ holds
for all $x,y\in\ns{B}$.

If: $x\sim_{X}y$ holds for all $x,y\in\ns{B}$, i.e. $\ns{B}\times\ns{B}\subseteq{\sim_{X}}$.
By \prettyref{prop:control}, $B\times B\in\mathcal{C}_{X}$, i.e.
$B$ is bounded.
\end{proof}
\begin{prop}[{\cite[Corollary 3.13]{Ima19}}]
\label{prop:coarse-connectedness}A coarse space $X$ is coarsely
connected if and only if $x\sim_{X}y$ for all $x,y\in X$.
\end{prop}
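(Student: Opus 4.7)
The plan is to reduce the statement to an unwinding of the definition of coarse connectedness together with the definition of the finite closeness relation ${\sim_X} = \bigcup_{E\in\mathcal{C}_{X}}\ns{E}$. I expect the argument to be essentially formal, with no substantive obstacle.

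First I will recall that, by definition, $X$ is coarsely connected exactly when the singleton $\set{(x,y)}$ is controlled for every $x,y\in X$; equivalently, for every $x,y\in X$ there exists some $E\in\mathcal{C}_{X}$ with $(x,y)\in E$. Matching this against the definition of $\sim_X$ will already give essentially everything.

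For the ``only if'' direction, given $x,y\in X$ and some $E\in\mathcal{C}_{X}$ containing $(x,y)$, I just observe that standard elements of $E$ lie in $\ns{E}$, hence $(x,y)\in\ns{E}\subseteq\bigcup_{F\in\mathcal{C}_{X}}\ns{F}={\sim_{X}}$. For the ``if'' direction, suppose $x\sim_{X}y$ for all $x,y\in X$. Fixing such a pair, the definition of $\sim_X$ yields some $E\in\mathcal{C}_{X}$ with $(x,y)\in\ns{E}$; since $(x,y)$ and $E$ are both standard, transfer gives $(x,y)\in E$, and then $\set{(x,y)}\subseteq E$ is controlled by the subset-closure of $\mathcal{C}_{X}$, which is coarse connectedness.

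A slightly more thematic alternative, which I may prefer for stylistic consistency with the previous two propositions, is to invoke \prettyref{prop:bounded} directly: $X$ is coarsely connected iff every two-point subset $B=\set{x,y}$ of $X$ is bounded, and since $\ns{B}=B$ for any finite standard $B$, \prettyref{prop:bounded} transcribes this exactly as the condition $x\sim_{X}y$ for all $x,y\in X$. Either route is short; the only thing to watch carefully is which objects are standard versus internal, so that the move from $(x,y)\in\ns{E}$ back to $(x,y)\in E$ is justified by transfer rather than assumed.
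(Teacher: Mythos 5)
Your proposal is correct, and your ``alternative'' route is precisely the paper's proof: the paper observes that $X$ is coarsely connected iff every two-point set $\set{x,y}$ is bounded and then applies \prettyref{prop:bounded}. Your primary route --- unwinding the definition of ${\sim_X}$ directly, using that $(x,y)\in\ns{E}$ iff $(x,y)\in E$ for standard $x,y,E$ by transfer, and the subset-closure of $\mathcal{C}_X$ --- is equally valid and slightly more self-contained, since it bypasses \prettyref{prop:bounded}; the paper's version buys brevity and stylistic uniformity with the surrounding propositions. Either way the content is the same and there is no gap.
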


\begin{proof}
The coarse space $X$ is coarsely connected if and only if $\set{x,y}$
is bounded for all $x,y\in X$. Apply \prettyref{prop:bounded} to
the latter condition.
\end{proof}
\begin{prop}[{\cite[Theorem 3.23]{Ima19}}]
\label{prop:bornologous}A map $f\colon X\to Y$ between coarse spaces
is bornologous if and only if $\ns{f}\colon\ns{X}\to\ns{Y}$ preserves
finite closeness, i.e., $x\sim_{X}x'$ implies $\ns{f}\left(x\right)\sim_{Y}\ns{f}\left(x'\right)$.
\end{prop}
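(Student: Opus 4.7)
The plan is to prove both directions by combining the definition of bornologous (images of controlled sets are controlled) with the characterisation of controlled sets from \prettyref{prop:control} and a careful application of the transfer principle.

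For the ``only if'' direction, I would assume $f$ is bornologous and take $x,x' \in \ns{X}$ with $x \sim_X x'$. Then there is some $E \in \mathcal{C}_X$ with $(x,x') \in \ns{E}$. Setting $F = (f\times f)(E)$, bornologousness gives $F \in \mathcal{C}_Y$. The key step is to apply transfer to the standard statement ``for all $(a,b) \in E$, $(f(a),f(b)) \in F$'' to conclude $(\ns{f}(x), \ns{f}(x')) \in \ns{F}$, which witnesses $\ns{f}(x) \sim_Y \ns{f}(x')$.

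For the ``if'' direction, I would fix an arbitrary $E \in \mathcal{C}_X$ and try to verify that $F := (f\times f)(E) \in \mathcal{C}_Y$ by invoking \prettyref{prop:control}; this reduces the task to showing $\ns{F} \subseteq {\sim_Y}$. Given $(y,y') \in \ns{F}$, transfer of the standard statement ``every element of $F$ is of the form $(f(a),f(b))$ with $(a,b) \in E$'' yields $(x,x') \in \ns{E}$ with $y = \ns{f}(x)$ and $y' = \ns{f}(x')$. Since $\ns{E} \subseteq {\sim_X}$, we have $x \sim_X x'$, and the hypothesis then gives $y \sim_Y y'$, as required.

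I expect the only subtlety to be the bookkeeping around transferring the defining property of the image set $(f\times f)(E)$, in particular ensuring one applies transfer to a first-order formula quantifying over standard objects. Once that is set up, both implications are immediate and symmetric, and the proof is essentially a one-line use of \prettyref{prop:control} in each direction.
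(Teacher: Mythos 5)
Your proposal is correct and follows essentially the same route as the paper: in the ``only if'' direction both arguments transfer the membership $(f(a),f(b))\in(f\times f)(E)$, and in the ``if'' direction both reduce to showing $\ns{\left(\left(f\times f\right)\left(E\right)\right)}\subseteq{\sim_{Y}}$ and invoke \prettyref{prop:control}. The paper phrases the second direction as a chain of set inclusions rather than elementwise, but the content is identical.
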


\begin{proof}
Only if: let $x,x'\in\ns{X}$ and suppose $x\sim_{X}x'$. Choose an
$E\in\mathcal{C}_{X}$ such that $\left(x,x'\right)\in\ns{E}$. Since
$f$ is bornologous, $F:=\left(f\times f\right)\left(E\right)\in\mathcal{C}_{Y}$.
By transfer, we have $\left(\ns{f}\left(x\right),\ns{f}\left(x'\right)\right)\in\ns{F}$.
Hence $\ns{f}\left(x\right)\sim_{Y}\ns{f}\left(x'\right)$.

If: let $E\in\mathcal{C}_{X}$. Then $\ns{\left(\left(f\times f\right)\left(E\right)\right)}=\left(\ns{f}\times\ns{f}\right)\left(\ns{E}\right)\subseteq\left(\ns{f}\times\ns{f}\right)\left(\sim_{X}\right)\subseteq{\sim_{Y}}$.
By \prettyref{prop:control}, $\left(f\times f\right)\left(E\right)\in\mathcal{C}_{Y}$.
Hence $f$ is bornologous, because $E$ was arbitrary.
\end{proof}
\begin{prop}[{\cite[Theorem 2.26]{Ima19}}]
\label{prop:proper}A map $f\colon\left(X,\xi\right)\to\left(Y,\eta\right)$
between pointed coarse spaces is proper at the base point (i.e. the
inverse image of each bounded set of $Y$ containing $\eta$ is bounded
in $X$) if and only if $\ns{f}^{-1}\left(\FIN\left(Y,\eta\right)\right)\subseteq\FIN\left(X,\xi\right)$.
\end{prop}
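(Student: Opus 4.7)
The plan is to derive both directions from \prettyref{prop:bounded}, which characterises boundedness in terms of the finite closeness relation. For the only if direction I will need to construct, for each $x\in\ns{f}^{-1}\left(\mathrm{FIN}\left(Y,\eta\right)\right)$, a bounded neighbourhood of $\eta$ in $Y$ whose nonstandard extension captures $\ns{f}\left(x\right)$. The if direction will hinge on the transitivity of $\sim_{X}$, which holds because the coarse structure $\mathcal{C}_{X}$ is closed under composition.

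For the only if direction, pick $E\in\mathcal{C}_{Y}$ with $\left(\ns{f}\left(x\right),\eta\right)\in\ns{E}$, and replace $E$ by $E\cup E^{-1}\cup\Delta_{Y}$ so that it is symmetric and reflexive. Put $B=E\left[\left\{\eta\right\}\right]=\set{y\in Y|\left(\eta,y\right)\in E}$. Then $\eta\in B$; the inclusion $B\times B\subseteq E\circ E\in\mathcal{C}_{Y}$ shows that $B$ is bounded; and transfer gives $\ns{f}\left(x\right)\in\ns{B}$. Properness at the base point then ensures that $A=f^{-1}\left(B\right)$ is bounded in $X$, and transfer gives $x\in\ns{A}=\ns{f}^{-1}\left(\ns{B}\right)$. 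Assuming the pointed-map convention $f\left(\xi\right)\sim_{Y}\eta$ (so that, after possibly enlarging $B$ to also contain $f\left(\xi\right)$, we have $\xi\in A$), \prettyref{prop:bounded} yields $x\sim_{X}\xi$, as required.

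For the if direction, let $B\subseteq Y$ be bounded with $\eta\in B$. By \prettyref{prop:bounded}, $y\sim_{Y}\eta$ for every $y\in\ns{B}$, so $\ns{B}\subseteq\mathrm{FIN}\left(Y,\eta\right)$. Transfer then gives the chain $\ns{\left(f^{-1}\left(B\right)\right)}=\ns{f}^{-1}\left(\ns{B}\right)\subseteq\ns{f}^{-1}\left(\mathrm{FIN}\left(Y,\eta\right)\right)\subseteq\mathrm{FIN}\left(X,\xi\right)$. Because $\mathcal{C}_{X}$ is closed under composition, the relation $\sim_{X}$ is transitive; together with its symmetry this implies that any two points of $\ns{\left(f^{-1}\left(B\right)\right)}$, being both finitely close to $\xi$, are finitely close to each other. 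A second application of \prettyref{prop:bounded} shows that $f^{-1}\left(B\right)$ is bounded.

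The main obstacle is the only if direction: one must produce a single bounded set $B$ containing $\eta$ whose nonstandard extension simultaneously captures the given point $\ns{f}\left(x\right)$, and one must take care of the base point $\xi$ so as to be able to invoke \prettyref{prop:bounded} on $f^{-1}\left(B\right)$. Everything else reduces to transfer and the standard closure properties of a coarse structure.
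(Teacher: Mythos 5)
Your proof is correct and follows essentially the same route as the paper's: the explicit bounded set $B=E\left[\left\{ \eta\right\} \right]$ you construct in the ``only if'' direction is precisely what underlies the paper's one-line identity $\mathrm{FIN}\left(Y,\eta\right)=\bigcup_{\eta\in B\subseteq_{\mathrm{bounded}}Y}\ns{B}$, and your ``if'' direction is the same two applications of \prettyref{prop:bounded} (with the transitivity of $\sim_{X}$ made explicit where the paper leaves it tacit). You are also right that both arguments rely on the pointed-map convention $f\left(\xi\right)\sim_{Y}\eta$ to ensure $\xi$ lies in (a bounded enlargement of) $f^{-1}\left(B\right)$; the paper keeps this implicit.
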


\begin{proof}
Only if: $\ns{f}^{-1}\left(\FIN\left(Y,\eta\right)\right)=\bigcup_{\eta\in B\subseteq_{\mathrm{bounded}}Y}\ns{f}^{-1}\left(\ns{B}\right)\subseteq\bigcup_{\xi\in A\subseteq_{\mathrm{bounded}}X}\ns{A}=\FIN\left(X,\xi\right)$.

If: let $B$ be a bounded set of $Y$ that contains $\eta$. Then
$\ns{B}\subseteq\FIN\left(Y,\eta\right)$ by \prettyref{prop:bounded}.
By assumption, $\ns{\left(f^{-1}\left(B\right)\right)}=\ns{f}^{-1}\left(\ns{B}\right)\subseteq\ns{f}^{-1}\left(\FIN\left(Y,\eta\right)\right)\subseteq\FIN\left(X,\xi\right)$.
Hence $f^{-1}\left(B\right)$ is bounded by \prettyref{prop:bounded}.
\end{proof}
\begin{prop}
\label{prop:bornotopy}Two maps $f,g\colon X\to Y$ between coarse
spaces are bornotopic (or close) if and only if $\ns{f}\left(x\right)\sim_{Y}\ns{g}\left(x\right)$
for all $x\in\ns{X}$.
\end{prop}

\begin{proof}
Recall that $f$ and $g$ are bornotopic if and only if $\set{\left(f\left(x\right),g\left(x\right)\right)|x\in X}\in\mathcal{C}_{Y}$.
Then apply \prettyref{prop:control}.
\end{proof}
We say that a coarse space $X$ is \emph{coarsely Archimedean} if
$X\times X=\bigcup_{n\in\mathbb{N}}E^{n}$ holds for some $E\in\mathcal{C}_{X}$,
where $E^{n}$ refers to the $n$-fold composition of $E$. Note that
this notion is the large-scale counterpart of the Archimedean property
of uniform spaces (Hu \cite{Hu47}). A (possibly external) subset
$B$ of $\ns{X}$ is said to be \emph{macrochain-connected} if for
any $x,y\in B$, there exists an internal hyperfinite sequence $\set{s_{i}}_{i\leq n}$
in $B$, where $n\in\ns{\mathbb{N}}$, such that $s_{0}=x$, $s_{n}=y$
and $s_{i}\sim_{X}s_{i+1}$ for all $i<n$. For standard sets, these
two notions are equivalent.
\begin{prop}
$X$ is coarsely Archimedean if and only if $\prescript{\ast}{}{X}$
is macrochain-connected.
\end{prop}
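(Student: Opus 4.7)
The plan is to establish both directions via the transfer principle, with the ``if'' direction additionally relying on saturation in the style of the proof of \prettyref{prop:control}. The key observation is that being coarsely Archimedean is the standard first-order assertion ``$\exists E\in\mathcal{C}_{X}$, $\forall(x,y)\in X\times X$, $\exists n\in\mathbb{N}$, $(x,y)\in E^{n}$'', which transfers cleanly because the operation $n\mapsto E^{n}$ is definable by internal recursion; thus both sides of the equivalence correspond to the same first-order statement, one about $X$ and one about $\ns{X}$.

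For the ``only if'' direction, suppose $X$ is coarsely Archimedean, witnessed by some $E\in\mathcal{C}_{X}$. Transfer yields $\ns{X}\times\ns{X}=\bigcup_{n\in\ns{\mathbb{N}}}(\ns{E})^{n}$. Given $x,y\in\ns{X}$, pick $n\in\ns{\mathbb{N}}$ with $(x,y)\in(\ns{E})^{n}$; unfolding the transferred definition of the $n$-fold composition produces an internal hyperfinite sequence $x=s_{0},s_{1},\dots,s_{n}=y$ with $(s_{i},s_{i+1})\in\ns{E}\subseteq{\sim_{X}}$ for all $i<n$, which is the required macrochain.

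For the ``if'' direction, assume $\ns{X}$ is macrochain-connected. As in the proof of \prettyref{prop:control}, saturation provides an $F\in\ns{\mathcal{C}_{X}}$ with ${\sim_{X}}\subseteq F$ (see \cite[Lemma 3.3]{Ima19}). Given $x,y\in\ns{X}$, macrochain-connectedness yields an internal hyperfinite chain $x=s_{0},\dots,s_{n}=y$ in $\ns{X}$ with $(s_{i},s_{i+1})\in{\sim_{X}}\subseteq F$ for all $i<n$, so $(x,y)\in F^{n}$. Hence $\ns{X}\times\ns{X}=\bigcup_{n\in\ns{\mathbb{N}}}F^{n}$, which is precisely the transfer of the coarse Archimedean condition; applying transfer in reverse produces a standard $E\in\mathcal{C}_{X}$ with $X\times X=\bigcup_{n\in\mathbb{N}}E^{n}$.

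The main obstacle is that $\sim_{X}=\bigcup_{E\in\mathcal{C}_{X}}\ns{E}$ is an external union, so the hypothesis ``$(s_{i},s_{i+1})\in{\sim_{X}}$'' is external and cannot be transferred directly. Saturation is the indispensable tool: collapsing ${\sim_{X}}$ inside a single internal $F\in\ns{\mathcal{C}_{X}}$ is exactly what converts the external macrochain into an internal $F^{n}$-chain, which in turn witnesses an internal, and hence transferable, assertion.
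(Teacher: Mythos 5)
Your proof is correct and follows essentially the same route as the paper: transfer of the witness $E$ for the ``only if'' direction, and saturation to collapse the external relation $\sim_{X}$ into a single internal $F\in\ns{\mathcal{C}_{X}}$ followed by downward transfer for the ``if'' direction. Your closing remark about why saturation is indispensable is exactly the point the paper's proof relies on, just made explicit.
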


\begin{proof}
Only if: choose an $E\in\mathcal{C}_{X}$ such that $X\times X=\bigcup_{n\in\mathbb{N}}E^{n}$.
For any $x,y\in\ns{X}$, by transfer, there exists an $n\in\prescript{\ast}{}{\mathbb{N}}$
such that $\left(x,y\right)\in\ns{E^{n}}\subseteq{\sim_{X}}^{n}$.
Hence $\prescript{\ast}{}{X}$ is macrochain-connected.

If: by saturation, we can find an $E\in\ns{\mathcal{C}_{X}}$ such
that ${\sim_{X}}\subseteq E$. Then we have that $\ns{X}\times\ns{X}=\mathop{\ns{\bigcup}}_{n\in\ns{\mathbb{N}}}E^{n}$.
By transfer, $X\times X=\bigcup_{n\in\mathbb{N}}F^{n}$ holds for
some standard $F\in\mathcal{C}_{X}$. Therefore, $X$ is coarsely
Archimedean.
\end{proof}
\begin{example}
\label{exa:Uncountable-basis}Let $X$ be an uncountable set equipped
with the coarse structure $\mathcal{C}_{X}:=\set{E\subseteq X^{2}|E\setminus\varDelta_{X}\text{ is countable}}$.
This space is not coarsely Archimedean: for any $E\in\mathcal{C}_{X}$,
since $\bigcup_{n\in\mathbb{N}}E^{n}\setminus\varDelta_{X}$ is countable,
it follows that $\bigcup_{n\in\mathbb{N}}E^{n}\neq X\times X$.
\end{example}

\section{\label{sec:Invariant-iota}Invariant $\iota$}

\subsection{Construction}

Let $\left(X,\xi\right)$ be a pointed coarse space. We define the
invariant $\iota\left(X,\xi\right)$ as follows. Denote the set $\ns{X}\setminus\FIN\left(X,\xi\right)$
by $\INF\left(X,\xi\right)$. For $x,y\in\INF\left(X,\xi\right)$,
we write $x\equiv_{X,\xi}^{\iota}y$ if they lie in the same macrochain-connected
component of $\INF\left(X,\xi\right)$. It is clear that $\equiv_{X,\xi}^{\iota}$
is an equivalence relation on $\INF\left(X,\xi\right)$. For each
$x\in\INF\left(X,\xi\right)$, let $\left[x\right]_{X,\xi}^{\iota}$
denote the $\equiv_{X,\xi}^{\iota}$-equivalence class of $x$. Finally,
define
\[
\iota\left(X,\xi\right):=\Set{\left[x\right]_{X,\xi}^{\iota}|x\in\INF\left(X,\xi\right)}.
\]
In short, $\iota\left(X,\xi\right)$ is the set of all macrochain-connected
components of $\INF\left(X,\xi\right)$.
\begin{prop}[Changing the base point]
Let $X$ be a coarse space. Suppose that two points $\xi,\eta\in X$
lie on the same coarsely connected component. Then $\iota\left(X,\xi\right)=\iota\left(X,\eta\right)$.
\end{prop}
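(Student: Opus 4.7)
The plan is to show that the two invariants are literally equal by reducing everything to the equality $\mathrm{INF}(X,\xi) = \mathrm{INF}(X,\eta)$, which in turn comes from the equality $\mathrm{FIN}(X,\xi) = \mathrm{FIN}(X,\eta)$.

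First I would note that, since $\xi$ and $\eta$ lie in a common coarsely connected component, the set $\{\xi,\eta\}$ is bounded in $X$; hence by \prettyref{prop:bounded} applied to $B=\{\xi,\eta\}$ (which equals $\ns{B}$ as it is finite and standard), we have $\xi \sim_X \eta$.

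Next I would verify that the finite-closeness relation is transitive: if $x\sim_X y$ via some controlled set $E_1$ and $y\sim_X z$ via some controlled set $E_2$, then $(x,z)\in \ns{E_1}\circ\ns{E_2} = \ns{(E_1\circ E_2)}$, and $E_1\circ E_2\in\mathcal{C}_X$ by the axioms of a coarse structure, so $x\sim_X z$. Combining this transitivity with $\xi\sim_X\eta$ immediately gives $\mathrm{FIN}(X,\xi) = \mathrm{FIN}(X,\eta)$, and consequently $\mathrm{INF}(X,\xi) = \mathrm{INF}(X,\eta)$.

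Finally, the equivalence relation $\equiv_{X,\xi}^{\iota}$ is defined purely in terms of the ambient finite-closeness relation $\sim_X$ on $\ns{X}$ and the set $\mathrm{INF}(X,\xi)$; the basepoint $\xi$ enters only through the choice of this set. Since both ingredients agree with those determined by $\eta$, the two equivalence relations coincide, and therefore
\[
\iota(X,\xi) = \iota(X,\eta).
\]
The only delicate point is the transitivity of $\sim_X$, which is not completely formal but is an immediate consequence of the closure of $\mathcal{C}_X$ under composition; the remainder of the argument is essentially bookkeeping.
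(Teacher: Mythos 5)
Your proof is correct and follows essentially the same route as the paper: establish $\xi\sim_X\eta$, use the (symmetric and transitive) finite-closeness relation to conclude $\mathrm{INF}(X,\xi)=\mathrm{INF}(X,\eta)$, and observe that $\equiv^{\iota}$ depends only on this set and on $\sim_X$. You merely make explicit two points the paper leaves implicit, namely the transitivity of $\sim_X$ via closure of $\mathcal{C}_X$ under composition, and the direct application of \prettyref{prop:bounded} to $\{\xi,\eta\}$ rather than citing the coarse-connectedness criterion for the whole space.
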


\begin{proof}
By \prettyref{prop:coarse-connectedness}, we have that $\xi\sim_{X}\eta$.
For every $x\in\ns{X}$, if $x\sim_{X}\xi$, then $x\sim_{X}\eta$,
and vice versa. Hence $\INF\left(X,\xi\right)=\INF\left(X,\eta\right)$,
and thus $\iota\left(X,\xi\right)=\iota\left(X,\eta\right)$.
\end{proof}
Because of this, if $X$ is coarsely connected, one can simply write
$\iota\left(X\right)$ for $\iota\left(X,\xi\right)$ dropping the
base point $\xi$ with no ambiguity. The same applies to $\INF\left(X,\xi\right)$
and $\equiv_{X,\xi}^{\iota}$. In particular, one can use this notation
for all metrisable spaces.
\begin{example}
Consider the real line $\mathbb{R}$ (\prettyref{fig:line}).
\begin{figure}
\begin{tikzpicture}
	\draw (3, 0) -- (-3, 0);
	\node[above] at (-3, 0) {$-\infty$};
	\node[above] at (3, 0) {$+\infty$};
	\fill[pattern=north west lines] (-2, -0.2) rectangle (2, 0.2);
\end{tikzpicture}\caption{\label{fig:line}The real line. The shaded region represents the finite
part of the real line. The negative infinities and the positive infinities
are separated from each other by the finite part.}
\end{figure}
 The set $\INF\left(\mathbb{R}\right)$ consists of positive and negative
infinite hyperreals. Two infinite hyperreals are $\equiv_{\mathbb{R}}^{\iota}$-related
if and only if they have the same sign. Hence we have that $\iota\left(\mathbb{R}\right)=\set{\pm\infty}$,
where $\pm\infty$ denote the equivalence classes of positive and
negative infinite hyperreals, respectively. See also \cite[Corollary 15]{MSM10}.
\end{example}

\begin{example}
Consider the plane $\mathbb{R}^{2}$ (\prettyref{fig:plane}).
\begin{figure}
\begin{tikzpicture}
	\fill[pattern=north west lines] (0, 0) circle[radius=2];
	\fill (-2.5, 0) circle[radius=1pt] node[above] {$u$};
	\fill (3, 0) circle[radius=1pt] node[above] {$v$};
	\draw (-2.5, 0) arc (180:360:2.5);
	\draw[->] (2.5, 0) -- (3, 0);
\end{tikzpicture}\caption{\label{fig:plane}The Euclidean plane. The set of all infinite points
of the plane looks like the plane with a single hole.}
\end{figure}
 It is easy to see that any two points $u$ and $v$ of $\INF\left(\mathbb{R}^{2}\right)$
are $\equiv_{\mathbb{R}^{2}}^{\iota}$-related. Hence $\iota\left(\mathbb{R}^{2}\right)=\set{\infty}$.
The same equation holds for all dimensions greater than $2$. This
is contrasted to the case of $\mathbb{R}$.
\end{example}

\begin{example}[{cf. Miller \emph{et al.} \cite[Example 3]{MSM10}}]
Let $X:=\set{\left(\pm1,y\right)|1\leq y}\cup\set{\left(x,1\right)|-1\leq x\leq1}$
(\prettyref{fig:vase}).
\begin{figure}
\begin{tikzpicture}
	\draw[dashed, ->] (-4, 0) -- (+4, 0);
	\node[above] at (0, 4) {$\infty_{\upuparrows}$};
	\draw[dashed, ->] (0, -1) -- (0, 4);
	\draw (-1, 1) -- (1, 1);
	\draw (-1, 1) -- (-1, 4);
	\draw (1, 1) -- (1, 4);
\end{tikzpicture}\caption{\label{fig:vase}The straight vase.}
\end{figure}
 Any two points of $\INF\left(X\right)$ are $\equiv_{X}^{\iota}$-related.
Hence $\iota\left(X\right)=\set{\infty_{\upuparrows}}$. Note that
the ``collapsing'' map $X\ni\left(x,y\right)\mapsto y\in\mathbb{R}_{\geq1}$
gives a coarse equivalence. See also \cite[Corollary 17]{MSM10}.
\end{example}

\begin{example}
Let $X:=\set{\left(\pm y,y\right)|1\leq y}\cup\set{\left(x,1\right)|-1\leq x\leq1}$
(\prettyref{fig:flared-vase}).
\begin{figure}
\begin{tikzpicture}
	\draw[dashed, ->] (-4, 0) -- (+4, 0);
	\draw[dashed, ->] (0, -1) -- (0, 4);
	\draw (-1, 1) -- (1, 1);
	\node[above] at (-4, 4) {$\infty_{\nwarrow}$};
	\node[above] at (4, 4) {$\infty_{\nearrow}$};
	\draw (-1, 1) -- (-4, 4);
	\draw (1, 1) -- (4, 4);
\end{tikzpicture}\caption{\label{fig:flared-vase}The flared vase.}
\end{figure}
 Any two infinite points on the left and the right ``verges'' are
not $\equiv_{X}^{\iota}$-related. Hence $\iota\left(X\right)=\set{\infty_{\nwarrow},\infty_{\nearrow}}$.
Note that the projection $X\ni\left(x,y\right)\mapsto x\in\mathbb{R}$
gives an isomorphism of coarse spaces.
\end{example}

\begin{example}[{cf. DeLyser \emph{et al.} \cite[Example 4.5]{DLT13}}]
Let $X=\set{\left(0,y\right)|y\in\mathbb{R}_{\geq0}}\cup\set{\left(x,2^{n}\right)|x\in\mathbb{R},n\in\mathbb{N}}$
(\prettyref{fig:antenna}).
\begin{figure}
\begin{tikzpicture}
	\draw (0, 0) -- (0, 7);
	\draw (-4, 0.5) -- (4, 0.5);
	\draw (-4, 1) -- (4, 1);
	\draw (-4, 2) -- (4, 2);
	\draw (-4, 4) -- (4, 4);
	\node[above] at (0, 7) {$\infty_{\uparrow}$};
	\node[left] at (-4, 3) {$\infty_{\leftleftarrows}$};
	\node[right] at ( 4, 3) {$\infty_{\rightrightarrows}$};
\end{tikzpicture}\caption{\label{fig:antenna}The antenna.}
\end{figure}
 Let us show that each infinite point $\left(x,y\right)$ of $\ns{X}$
is $\equiv_{X}^{\iota}$-related to one of $\left(-\omega,1\right)$,
$\left(0,\omega\right)$ and $\left(\omega,1\right)$ for a fixed
$\omega\in\ns{\mathbb{N}}\setminus\mathbb{N}$. There are three cases.
\begin{casenv}
\item $y$ is finite and $x$ is negative infinite. Then $y=2^{n}$ for
some (standard) $n\in\mathbb{N}$. Firstly, the internal hyperfinite
sequence
\[
\underset{n+1}{\underbrace{\left(x,y\right),\left(x,2^{-1}y\right),\left(x,2^{-2}y\right),\ldots,\left(x,1\right)}}
\]
connects $\left(x,y\right)$ and $\left(x,1\right)$. Any two adjacent
points are of distance less than or equal to $2^{n-1}$. Secondly,
the internal hyperfinite sequence
\[
\underset{\left\lceil \left|x+\omega\right|\right\rceil +1}{\underbrace{\left(x,1\right),\left(x-\frac{x+\omega}{\left\lceil \left|x+\omega\right|\right\rceil },1\right),\left(x-2\cdot\frac{x+\omega}{\left\lceil \left|x+\omega\right|\right\rceil },1\right),\ldots,\left(-\omega,1\right)}}
\]
connects $\left(x,1\right)$ and $\left(-\omega,1\right)$, where
$\left\lceil \cdot\right\rceil $ is the ceiling function. Any two
adjacent points are of distance less than or equal to $\left|x+\omega\right|/\left\lceil \left|x+\omega\right|\right\rceil \le1$.
Hence $\left(x,y\right)\equiv_{X}^{\iota}\left(x,1\right)\equiv_{X}^{\iota}\left(-\omega,1\right)$.
\item $y$ is finite and $x$ is positive infinite. Similarly to the first
case, we have that $\left(x,y\right)\equiv_{X}^{\iota}\left(\omega,1\right)$.
\item $y$ is infinite. Firstly, the internal hyperfinite sequence
\[
\underset{\left\lceil \left|x\right|\right\rceil +1}{\underbrace{\left(x,y\right),\left(x-\frac{x}{\left\lceil \left|x\right|\right\rceil },y\right),\left(x-2\cdot\frac{x}{\left\lceil \left|x\right|\right\rceil },y\right),\ldots,\left(0,y\right)}}
\]
connects $\left(x,y\right)$ and $\left(0,y\right)$. Any two adjacent
points are of distance less than or equal to $\left|x\right|/\left\lceil \left|x\right|\right\rceil \leq1$.
Secondly, the internal hyperfinite sequence
\[
\underset{\left\lceil \left|\omega-y\right|\right\rceil +1}{\underbrace{\left(0,y\right),\left(0,y+\frac{\omega-y}{\left\lceil \left|\omega-y\right|\right\rceil }\right),\left(0,y+2\cdot\frac{\omega-y}{\left\lceil \left|\omega-y\right|\right\rceil }\right),\ldots,\left(0,\omega\right)}}
\]
connects $\left(0,y\right)$ and $\left(0,\omega\right)$. Any two
adjacent points are of distance less than or equal to $\left|\omega-y\right|/\left\lceil \left|\omega-y\right|\right\rceil \le1$.
Hence $\left(x,y\right)\equiv_{X}^{\iota}\left(0,y\right)\equiv_{X}^{\iota}\left(0,\omega\right)$.
\end{casenv}
On the other hand, any two of $\left(-\omega,1\right)$, $\left(0,\omega\right)$
and $\left(\omega,1\right)$ are not $\equiv_{X}^{\iota}$-related.
\begin{enumerate}
\item To prove $\left(-\omega,1\right)\not\equiv_{X}^{\iota}\left(0,\omega\right)$,
suppose on the contrary that $\left(-\omega,1\right)\equiv_{X}^{\iota}\left(\omega,1\right)$,
i.e., there is an internal hyperfinite sequence $\set{\left(x_{i},y_{i}\right)}_{i\leq n}$
in $\INF\left(X\right)$ such that $\left(x_{0},y_{0}\right)=\left(-\omega,1\right)$,
$\left(x_{n},y_{n}\right)=\left(0,\omega\right)$ and $\left(x_{i},y_{i}\right)\sim_{X}\left(x_{i+1},y_{i+1}\right)$
for all $i<n$. Let $i_{0}>0$ be the smallest hyperinteger such that
$x_{i_{0}}=0$. Since $\left(x_{i_{0}},y_{i_{0}}\right)=\left(0,y_{i_{0}}\right)$
is infinite, $y_{i_{0}}$ must be infinite. Since $\left(x_{i_{0}-1},y_{i_{0}-1}\right)\sim_{X}\left(x_{i_{0}},y_{i_{0}}\right)$,
we have that $y_{i_{0}-1}\sim_{\mathbb{R}}y_{i_{0}}$, so $y_{i_{0}-1}$
is infinite too. For each $i<i_{0}$, since $x_{i}\neq0$ by the choice
of $i_{0}$, it follows that $y_{i}=2^{k_{i}}$ for some $k_{i}\in\ns{\mathbb{N}}$.
Let us show that $y_{i_{0}-j}=y_{i_{0}-1}$ for all $1\leq j\leq i_{0}$
by (internal) induction on $j$. The case $j=1$ is trivial. Suppose
$j>1$. By the induction hypothesis, $2^{k_{i_{0}-j}}=y_{i_{0}-j}\sim_{\mathbb{R}}y_{i_{0}-j+1}=y_{i_{0}-1}=2^{k_{i_{0}-1}}$,
where $k_{i_{0}-j}$ and $k_{i_{0}-1}$ are both infinite. Hence $k_{i_{0}-j}=k_{i_{0}-1}$
and $y_{i_{0}-j}=y_{i_{0}-1}$. (Otherwise, let $k=\min\set{k_{i_{0}-j},k_{i_{0}-1}}$,
then $\left|y_{i_{0}-j}-y_{i_{0}-1}\right|\geq2^{k}=\text{infinite}$,
which contradicts with $y_{i_{0}-j}\sim_{\mathbb{R}}y_{i_{0}-1}$.)
In particular, we have $0=y_{0}=y_{i_{0}-1}\neq0$, a contradiction.
\item With a similar argument, we can prove that $\left(\omega,1\right)\not\equiv_{X}^{\iota}\left(0,\omega\right)$.
\item To prove $\left(-\omega,1\right)\not\equiv_{X}^{\iota}\left(\omega,1\right)$,
suppose on the contrary that $\left(-\omega,1\right)\equiv_{X}^{\iota}\left(\omega,1\right)$,
i.e., there is an internal hyperfinite sequence $\set{\left(x_{i},y_{i}\right)}_{i\leq n}$
in $\INF\left(X\right)$ such that $\left(x_{0},y_{0}\right)=\left(-\omega,1\right)$,
$\left(x_{n},y_{n}\right)=\left(\omega,1\right)$ and $\left(x_{i},y_{i}\right)\sim_{X}\left(x_{i+1},y_{i+1}\right)$
for all $i<n$. Then $\set{x_{i}}_{i\leq n}$ is an internal hyperfinite
sequence in $\ns{\mathbb{R}}$ such that $x_{0}=-\omega$, $x_{n}=\omega$
and $x_{i}\sim_{\mathbb{R}}x_{i+1}$. However, since $\left|\iota\left(\mathbb{R}\right)\right|=1$,
$x_{i_{0}}$ must be finite for some $i_{0}\leq n$. On the other
hand, since $\left(x_{i_{0}},y_{i_{0}}\right)$ is infinite, $y_{i_{0}}$
must be infinite. Hence $\left(-\omega,1\right)\equiv_{X}^{\iota}\left(x_{i_{0}},y_{i_{0}}\right)\equiv_{X}^{\iota}\left(0,\omega\right)$,
a contradiction.
\end{enumerate}
Consequently, $\iota\left(X\right)=\set{\infty_{\leftleftarrows},\infty_{\uparrow},\infty_{\rightrightarrows}}$,
where $\infty_{\leftleftarrows}$, $\infty_{\uparrow}$ and $\infty_{\rightrightarrows}$
denote the equivalence classes of $\left(-\omega,1\right)$, $\left(0,\omega\right)$
and $\left(\omega,1\right)$, respectively.
\end{example}

\subsection{Coarse invariance}
\begin{lem}
Let $f\colon\left(X,\xi\right)\to\left(Y,\eta\right)$ be a proper
map. Then $\ns{f}\colon\ns{\left(X,\xi\right)}\to\ns{\left(Y,\eta\right)}$
maps $\INF\left(X,\xi\right)$ into $\INF\left(Y,\eta\right)$.
\end{lem}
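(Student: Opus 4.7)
The plan is to derive the statement as a direct consequence of Proposition \ref{prop:proper}, essentially by contrapositive. Recall that $\mathrm{INF}(X,\xi)$ is defined as the complement $\ns{X}\setminus\mathrm{FIN}(X,\xi)$, so showing $\ns{f}$ sends $\mathrm{INF}(X,\xi)$ into $\mathrm{INF}(Y,\eta)$ is equivalent to showing that the preimage $\ns{f}^{-1}(\mathrm{FIN}(Y,\eta))$ lies inside $\mathrm{FIN}(X,\xi)$.

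More concretely, I would fix an arbitrary $x \in \mathrm{INF}(X,\xi)$ and argue by contradiction: assume instead that $\ns{f}(x) \in \mathrm{FIN}(Y,\eta)$. Then $x$ belongs to $\ns{f}^{-1}(\mathrm{FIN}(Y,\eta))$. Since $f$ is proper (in particular, proper at the base point), Proposition \ref{prop:proper} gives the inclusion $\ns{f}^{-1}(\mathrm{FIN}(Y,\eta)) \subseteq \mathrm{FIN}(X,\xi)$, so $x \in \mathrm{FIN}(X,\xi)$, contradicting the choice of $x$. Hence $\ns{f}(x) \notin \mathrm{FIN}(Y,\eta)$, i.e., $\ns{f}(x) \in \mathrm{INF}(Y,\eta)$.

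There is no real obstacle here; the lemma is essentially a rephrasing of the nonstandard characterisation of properness already established in Proposition \ref{prop:proper}. The only subtlety worth flagging is that the definition of ``proper'' used in the hypothesis must match the ``proper at the base point'' condition invoked in Proposition \ref{prop:proper}, which the paper has already reconciled; otherwise the argument is a one-line contrapositive.
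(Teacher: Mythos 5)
Your proposal is correct and matches the paper's own proof: both arguments invoke Proposition \ref{prop:proper} to get $\ns{f}^{-1}\left(\mathrm{FIN}\left(Y,\eta\right)\right)\subseteq\mathrm{FIN}\left(X,\xi\right)$ and then pass to complements, the only difference being that you phrase the complementation pointwise as a proof by contradiction while the paper states it as a single set-theoretic inclusion.
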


\begin{proof}
By \prettyref{prop:proper}, $\ns{f}^{-1}\left(\ns{Y}\setminus\INF\left(Y,\eta\right)\right)\subseteq\ns{X}\setminus\INF\left(X,\xi\right)$
holds. It follows that $\ns{f}\left(\INF\left(X,\xi\right)\right)\subseteq\INF\left(Y,\eta\right)$.
\end{proof}
\begin{lem}
Let $f\colon\left(X,\xi\right)\to\left(Y,\eta\right)$ be a coarse
(i.e. proper bornologous) map. Then $\ns{f}\colon\ns{\left(X,\xi\right)}\to\ns{\left(Y,\eta\right)}$
sends $\equiv_{X,\xi}^{\iota}$-related pairs to $\equiv_{Y,\eta}^{\iota}$-related
pairs.
\end{lem}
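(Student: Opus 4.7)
The plan is to push a witness macrochain for $x \equiv_{X,\xi}^{\iota} x'$ forward along $\ns{f}$. Fix $x, x' \in \mathrm{INF}(X,\xi)$ with $x \equiv_{X,\xi}^{\iota} x'$; by the definition of $\equiv_{X,\xi}^{\iota}$ there is an internal hyperfinite sequence $(s_i)_{i \le n}$ inside $\mathrm{INF}(X,\xi)$ with $s_0 = x$, $s_n = x'$, and $s_i \sim_X s_{i+1}$ for every $i < n$. Set $t_i := \ns{f}(s_i)$; since $\ns{f}$ is internal, this is an internal hyperfinite sequence of length $n+1$ in $\ns{Y}$ with $t_0 = \ns{f}(x)$ and $t_n = \ns{f}(x')$.

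The immediate step, and the direct use of the bornologous hypothesis, is to verify consecutive $\sim_Y$-closeness of the $t_i$: by \prettyref{prop:bornologous}, $s_i \sim_X s_{i+1}$ implies $t_i \sim_Y t_{i+1}$ for every $i < n$.

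The remaining step is to check that $(t_i)$ stays in $\mathrm{INF}(Y,\eta)$, so that this $\sim_Y$-chain is an actual macrochain in $\mathrm{INF}(Y,\eta)$. The core observation is that $\sim_Y$ is an equivalence relation on $\ns{Y}$: reflexive (the diagonal lies in $\mathcal{C}_Y$), symmetric ($\mathcal{C}_Y$ is closed under inverses), and transitive ($\mathcal{C}_Y$ is closed under composition). Hence $\mathrm{FIN}(Y,\eta)$ coincides with the $\sim_Y$-class of $\eta$, and $\mathrm{INF}(Y,\eta)$ is $\sim_Y$-saturated, so a single $\sim_Y$-step cannot carry a point from $\mathrm{INF}(Y,\eta)$ into $\mathrm{FIN}(Y,\eta)$.

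The hard part will be iterating this one-step saturation along the chain of possibly nonstandard length $n$, because a naive external induction on $i$ only reaches standard indices. I would handle this via the saturation trick from the proof of \prettyref{prop:control}: fix an internal $F \in \ns{\mathcal{C}_Y}$ with ${\sim_Y} \subseteq F$, so that all adjacent $t$-pairs are uniformly controlled by the single internal relation $F$; then combine internal induction over $F$ with the $\sim_Y$-saturation of $\mathrm{INF}(Y,\eta)$ to trap the entire chain inside $\mathrm{INF}(Y,\eta)$. Once this is established, $(t_i)_{i \le n}$ is an internal hyperfinite macrochain in $\mathrm{INF}(Y,\eta)$ joining $\ns{f}(x)$ to $\ns{f}(x')$, yielding $\ns{f}(x) \equiv_{Y,\eta}^{\iota} \ns{f}(x')$.
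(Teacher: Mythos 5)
Your first two paragraphs reproduce the paper's proof exactly: push the witness chain $\set{s_{i}}_{i\leq n}$ forward to $\set{\ns{f}\left(s_{i}\right)}_{i\leq n}$, which is internal and hyperfinite, and apply \prettyref{prop:bornologous} to each adjacent pair. That is the entire content of the paper's argument, and it is correct as far as it goes.

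The problem is your third step. You are right that something must be said about the image chain lying in $\mathrm{INF}\left(Y,\eta\right)$ (the paper is silent on this), but the route you sketch cannot be completed, and the containment you are trying to prove is in fact false under the stated hypothesis. One-step $\sim_{Y}$-saturation of $\mathrm{INF}\left(Y,\eta\right)$ does not propagate along a chain of hyperfinite length: the set $\set{i\leq n|t_{i}\in\mathrm{INF}\left(Y,\eta\right)}$ is external, so internal induction does not apply to it, and replacing $\sim_{Y}$ by an internal $F\in\ns{\mathcal{C}_{Y}}$ with ${\sim_{Y}}\subseteq F$ does not help, since $F^{i}$ for infinite $i$ need not be contained in $\sim_{Y}$ (in $\ns{\mathbb{R}}$ the internal sequence $H,H-1,\ldots,1,0$ has all adjacent pairs at distance $1$, yet runs from an infinite point to $0$). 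More decisively, a constant map is bornologous and sends every chain in $\mathrm{INF}\left(X,\xi\right)$ to a single point of $\mathrm{FIN}\left(Y,\eta\right)$, so no argument from bornologousness alone can trap the image chain in $\mathrm{INF}\left(Y,\eta\right)$. The containment is supplied by properness, not by bornologousness: Lemma \ref{lem:inf-to-inf} applies pointwise to each $s_{i}\in\mathrm{INF}\left(X,\xi\right)$ and immediately places each $\ns{f}\left(s_{i}\right)$ in $\mathrm{INF}\left(Y,\eta\right)$. That is how the paper intends the two lemmas to combine in the functoriality theorem, where $f$ is coarse, hence both bornologous and proper. So delete the proposed induction, and either read the present lemma as asserting only the adjacency condition (as the paper does) or add properness to the hypotheses and cite Lemma \ref{lem:inf-to-inf} for the containment.
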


\begin{proof}
Let $x,y\in\INF\left(X,\xi\right)$ be $\equiv_{X,\xi}^{\iota}$-related.
Choose an internal hyperfinite sequence $\set{s_{i}}_{i\leq n}$ in
$\INF\left(X,\xi\right)$ such that $s_{0}=x$, $s_{n}=y$ and $s_{i}\sim_{X}s_{i+1}$
for all $i<n$. Then the internal hyperfinite sequence $\set{\ns{f}\left(s_{i}\right)}_{i\leq n}$
satisfies that $\ns{f}\left(s_{0}\right)=\ns{f}\left(x\right)$, $\ns{f}\left(s_{n}\right)=\ns{f}\left(y\right)$,
and $\ns{f}\left(s_{i}\right)\sim_{Y}\ns{f}\left(s_{i+1}\right)$
for all $i<n$ by \prettyref{prop:bornologous}. Hence $\ns{f}\left(x\right)\equiv_{Y,\eta}^{\iota}\ns{f}\left(y\right)$.
\end{proof}
\begin{thm}[Functoriality]
Every coarse map $f\colon\left(X,\xi\right)\to\left(Y,\eta\right)$
functorially induces a map $\iota f\colon\iota\left(X,\xi\right)\to\iota\left(Y,\eta\right)$.
\end{thm}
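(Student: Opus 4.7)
The plan is to use the two preceding lemmas to show that the set-theoretic recipe $\iota f\bigl([x]^{\iota}_{X,\xi}\bigr) = [\ns{f}(x)]^{\iota}_{Y,\eta}$ gives a well-defined map, and then to verify the two functoriality axioms by a straightforward application of the transfer principle.

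First I would fix $x \in \mathrm{INF}(X,\xi)$ and observe that $\ns{f}(x) \in \mathrm{INF}(Y,\eta)$ by \prettyref{lem:inf-to-inf}, using that $f$ is proper. Hence the putative value $[\ns{f}(x)]^{\iota}_{Y,\eta}$ at least makes sense. Next, to check that it does not depend on the choice of representative, I would take $x' \equiv^{\iota}_{X,\xi} x$ and apply the preceding (bornologous) lemma to conclude $\ns{f}(x) \equiv^{\iota}_{Y,\eta} \ns{f}(x')$, so the two classes agree. Thus the assignment $[x]^{\iota}_{X,\xi} \mapsto [\ns{f}(x)]^{\iota}_{Y,\eta}$ is a well-defined function $\iota f\colon \iota(X,\xi) \to \iota(Y,\eta)$.

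For functoriality, I would note that the identity $\mathrm{id}_X\colon (X,\xi) \to (X,\xi)$ satisfies $\ns{(\mathrm{id}_X)} = \mathrm{id}_{\ns{X}}$ by transfer, so $\iota(\mathrm{id}_X) = \mathrm{id}_{\iota(X,\xi)}$. For composition, given coarse maps $f\colon (X,\xi) \to (Y,\eta)$ and $g\colon (Y,\eta) \to (Z,\zeta)$, the composite $g \circ f$ is again coarse, and transfer gives $\ns{(g \circ f)} = \ns{g} \circ \ns{f}$. Then for any $x \in \mathrm{INF}(X,\xi)$,
\[
\iota(g \circ f)\bigl([x]^{\iota}_{X,\xi}\bigr) = \bigl[\ns{g}(\ns{f}(x))\bigr]^{\iota}_{Z,\zeta} = \iota g\bigl(\iota f\bigl([x]^{\iota}_{X,\xi}\bigr)\bigr),
\]
i.e.\ $\iota(g \circ f) = \iota g \circ \iota f$.

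The only substantive step is the well-definedness, and even that is not really an obstacle: both halves (the target lies in $\mathrm{INF}(Y,\eta)$, and the class is independent of representative) are explicit translations of the two immediately preceding lemmas, which encode properness and bornologousness respectively. The functoriality axioms are then essentially free once one invokes the transfer principle for the nonstandard extension of identities and compositions.
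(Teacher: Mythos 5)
Your proposal is correct and follows essentially the same route as the paper: define $\iota f[x]^{\iota}_{X,\xi}=[\ns{f}(x)]^{\iota}_{Y,\eta}$ and invoke the two preceding lemmas (properness for landing in $\mathrm{INF}(Y,\eta)$, bornologousness for independence of representative) to get well-definedness. Your explicit verification of the identity and composition axioms via $\ns{(g\circ f)}=\ns{g}\circ\ns{f}$ is slightly more detailed than the paper, which simply asserts functoriality, but the argument is the same.
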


\begin{proof}
Define $\iota f\colon\iota\left(X,\xi\right)\to\iota\left(Y,\eta\right)$
by letting $\iota f\left[x\right]_{X,\xi}^{\iota}:=\left[\ns{f}\left(x\right)\right]_{Y,\eta}^{\iota}$.
By the previous lemmas, $\iota f$ is well-defined. Thus $\iota\left({-}\right)$
can be extended to a functor from (an arbitrary small full subcategory
of) the category of coarse spaces to the category of sets.
\end{proof}
\begin{cor}
$\iota$ is invariant under isomorphisms of coarse spaces.
\end{cor}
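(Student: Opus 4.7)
The plan is to derive this directly from the Functoriality Theorem just established, by appealing to the general categorical fact that any functor sends isomorphisms to isomorphisms. Concretely, if $f\colon(X,\xi)\to(Y,\eta)$ is an isomorphism in the category of pointed coarse spaces, let $g\colon(Y,\eta)\to(X,\xi)$ denote its coarse inverse. The Functoriality Theorem immediately gives us maps $\iota f\colon\iota(X,\xi)\to\iota(Y,\eta)$ and $\iota g\colon\iota(Y,\eta)\to\iota(X,\xi)$, and it only remains to check that these are mutually inverse.

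For this I would verify the two functoriality identities $\iota(\mathrm{id}_{X})=\mathrm{id}_{\iota(X,\xi)}$ and $\iota(g\circ f)=\iota g\circ\iota f$. Both follow at once from the transfer principle, which yields $\ns{(\mathrm{id}_{X})}=\mathrm{id}_{\ns{X}}$ and $\ns{(g\circ f)}=\ns{g}\circ\ns{f}$; substituting these into the defining formula $\iota h[x]_{X,\xi}^{\iota}=[\ns{h}(x)]^{\iota}_{Y,\eta}$ from the proof of functoriality gives the desired identities on equivalence classes. Then $\iota f\circ\iota g=\iota(f\circ g)=\iota(\mathrm{id}_{Y})=\mathrm{id}_{\iota(Y,\eta)}$ and symmetrically $\iota g\circ\iota f=\mathrm{id}_{\iota(X,\xi)}$, so $\iota f$ is a bijection.

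I do not expect any genuine obstacle: the corollary is a formal consequence of functoriality, and the only computation needed is the elementary observation that the nonstandard extension commutes with composition and preserves identities. If one wished to avoid explicit invocation of functoriality of the hyperextension, one could alternatively argue directly: for any $x\in\mathrm{INF}(X,\xi)$, Lemmas preceding the Functoriality Theorem show that $\ns{f}$ and $\ns{g}$ restrict to maps between $\mathrm{INF}(X,\xi)$ and $\mathrm{INF}(Y,\eta)$ which preserve $\equiv^{\iota}$-classes, and transfer of $g\circ f=\mathrm{id}_{X}$ yields $\ns{g}(\ns{f}(x))=x$, so the induced maps on equivalence classes are mutually inverse.
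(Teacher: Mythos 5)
Your proposal is correct and follows exactly the route the paper intends: the corollary is stated without proof as an immediate consequence of the Functoriality Theorem (functors preserve isomorphisms), and your verification that $\ns{(-)}$ preserves identities and composition via transfer is precisely the routine check being left implicit. Nothing further is needed.
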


\begin{thm}[Coarse invariance]
If two coarse maps $f,g\colon\left(X,\xi\right)\to\left(Y,\eta\right)$
are bornotopic, then $\iota f=\iota g$.
\end{thm}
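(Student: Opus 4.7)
The plan is to unwind the definitions and observe that coarse homotopy immediately gives a length-two macrochain connecting $\ns{f}(x)$ and $\ns{g}(x)$. Concretely, fix an arbitrary class $[x]_{X,\xi}^{\iota} \in \iota(X,\xi)$ with representative $x \in \mathrm{INF}(X,\xi)$; the goal is to show $\iota f[x]_{X,\xi}^{\iota} = \iota g[x]_{X,\xi}^{\iota}$, which by definition of $\iota f$ and $\iota g$ amounts to verifying $\ns{f}(x) \equiv_{Y,\eta}^{\iota} \ns{g}(x)$.

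The first step is to check that both $\ns{f}(x)$ and $\ns{g}(x)$ live in $\mathrm{INF}(Y,\eta)$. This is precisely \prettyref{lem:inf-to-inf} applied to the properness of $f$ and $g$ separately. The second step is to invoke \prettyref{prop:coarse-homotopy}: since $f$ and $g$ are coarsely homotopic, $\ns{f}(x) \sim_Y \ns{g}(x)$.

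Now the internal hyperfinite sequence $\{s_i\}_{i \leq 1}$ given by $s_0 = \ns{f}(x)$, $s_1 = \ns{g}(x)$ (it is internal because it is \emph{standard} finite) is contained in $\mathrm{INF}(Y,\eta)$ and satisfies $s_0 \sim_Y s_1$. This is exactly the witness required by the definition of $\equiv_{Y,\eta}^{\iota}$. Hence $\ns{f}(x) \equiv_{Y,\eta}^{\iota} \ns{g}(x)$, and therefore $\iota f[x]_{X,\xi}^{\iota} = \iota g[x]_{X,\xi}^{\iota}$ for every class, giving $\iota f = \iota g$.

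There is essentially no obstacle here: the content of the theorem is already absorbed into the earlier nonstandard characterisations. The only point deserving a word of care is the reminder that a standard-finite sequence is automatically internal, so that it qualifies as a legitimate macrochain in the sense of the definition of $\equiv_{Y,\eta}^{\iota}$.
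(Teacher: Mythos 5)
Your proof is correct and follows essentially the same route as the paper: reduce to showing $\ns{f}\left(x\right)\equiv_{Y,\eta}^{\iota}\ns{g}\left(x\right)$ and obtain this from \prettyref{prop:coarse-homotopy}. You merely make explicit two small points the paper leaves implicit (that both images lie in $\mathrm{INF}\left(Y,\eta\right)$ via \prettyref{lem:inf-to-inf}, and that the two-term chain is internal), which is a reasonable amount of added detail.
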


\begin{proof}
Let $\left[x\right]_{X,\xi}^{\iota}\in\INF\left(X,\xi\right)$. By
\prettyref{prop:bornotopy}, $\ns{f}\left(x\right)\sim_{Y}\ns{g}\left(x\right)$,
so $\ns{f}\left(x\right)\equiv_{Y,\eta}\ns{g}\left(x\right)$. Hence
$\iota f\left[x\right]_{X,\xi}^{\iota}=\iota g\left[x\right]_{X,\xi}^{\iota}$.
\end{proof}
\begin{cor}
$\iota$ is invariant under coarse equivalences.
\end{cor}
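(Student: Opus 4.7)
The plan is to deduce this corollary formally from the two immediately preceding results (Functoriality and Coarse homotopy invariance), following the standard categorical observation that any functor which identifies homotopic morphisms automatically sends homotopy equivalences to isomorphisms.

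First I would unpack the assertion. A coarse homotopy equivalence between $(X,\xi)$ and $(Y,\eta)$ is a coarse map $f\colon(X,\xi)\to(Y,\eta)$ admitting a coarse map $g\colon(Y,\eta)\to(X,\xi)$ such that $g\circ f$ is coarsely homotopic to $\mathrm{id}_{X}$ and $f\circ g$ is coarsely homotopic to $\mathrm{id}_{Y}$. The Functoriality theorem produces the induced maps $\iota f\colon\iota(X,\xi)\to\iota(Y,\eta)$ and $\iota g\colon\iota(Y,\eta)\to\iota(X,\xi)$, and the goal is to show that these are mutually inverse bijections.

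Next I would combine functoriality with the coarse homotopy invariance theorem. Functoriality gives $\iota(g\circ f)=\iota g\circ\iota f$ and $\iota(\mathrm{id}_{X})=\mathrm{id}_{\iota(X,\xi)}$, while coarse homotopy invariance gives $\iota(g\circ f)=\iota(\mathrm{id}_{X})$. Chaining these equalities yields $\iota g\circ\iota f=\mathrm{id}_{\iota(X,\xi)}$, and a symmetric argument yields $\iota f\circ\iota g=\mathrm{id}_{\iota(Y,\eta)}$. Therefore $\iota f$ is a bijection with two-sided inverse $\iota g$, which is exactly what is needed.

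The only real content—and hence the main (minor) obstacle—is confirming that $\iota({-})$ genuinely preserves composition and identities, which the preceding theorem asserts but does not spell out. This is immediate from the definition $\iota f[x]^{\iota}_{X,\xi}=[\ns{f}(x)]^{\iota}_{Y,\eta}$ together with the transfer identities $\ns{(g\circ f)}=\ns{g}\circ\ns{f}$ and $\ns{\mathrm{id}_{X}}=\mathrm{id}_{\ns{X}}$, so no further calculation is required.
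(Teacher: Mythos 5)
Your argument is correct and is exactly the standard deduction the paper leaves implicit: the corollary is stated without proof as an immediate consequence of Functoriality and Coarse homotopy invariance, and your chain $\iota g\circ\iota f=\iota(g\circ f)=\iota(\mathrm{id}_{X})=\mathrm{id}_{\iota(X,\xi)}$ (and its symmetric counterpart) is precisely how that consequence is obtained. Your closing remark that preservation of composition and identities follows from $\ns{(g\circ f)}=\ns{g}\circ\ns{f}$ and $\ns{\mathrm{id}_{X}}=\mathrm{id}_{\ns{X}}$ correctly fills the only detail the paper does not spell out.
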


\subsection{Wedge sums}

Let $\left(X,\xi\right)$ and $\left(Y,\eta\right)$ be pointed sets.
The wedge sum $\left(X\vee Y,p\right)$ is the quotient set $\left(X\sqcup Y\right)/E$,
where $E$ is the equivalence closure of $\set{\left(\xi,\eta\right)}$,
with the base point $p=\left[\xi\right]=\left[\eta\right]$. Suppose
$X$ and $Y$ are coarse spaces. The wedge sum $X\vee Y$ is then
equipped with the coarse structure whose finite closeness relation
is described as
\[
u\sim_{X\vee Y}v\iff\begin{cases}
u\sim_{X}v, & u,v\in\ns{X},\\
u\sim_{Y}v, & u,v\in\ns{Y},\\
u\sim_{X}\xi\text{ and }\eta\sim_{Y}v, & u\in\ns{X},v\in\ns{Y},\\
u\sim_{Y}\eta\text{ and }\xi\sim_{X}v, & u\in\ns{Y},v\in\ns{X}.
\end{cases}
\]
For instance, if $X$ and $Y$ are metrisable, then the coarse structure
of $X\vee Y$ coincides with the coarse structure induced by the following
metric:
\[
d_{X\vee Y}\left(u,v\right):=\begin{cases}
d_{X}\left(u,v\right), & u,v\in X,\\
d_{Y}\left(u,v\right), & u,v\in Y,\\
d_{X}\left(u,\xi\right)+d_{Y}\left(\eta,v\right), & u\in X,v\in Y,\\
d_{Y}\left(u,\eta\right)+d_{X}\left(\xi,v\right), & u\in Y,v\in X.
\end{cases}
\]
We consider $\left(X\vee Y,p\right)$ as a pointed coarse space equipped
with the coarse structure above.
\begin{lem}
Let $\left(X,\xi\right)$ and $\left(Y,\eta\right)$ be pointed coarse
spaces. Then $\INF\left(X\vee Y,p\right)=\INF\left(X,\xi\right)\sqcup\INF\left(Y,\eta\right)$.
\end{lem}

\begin{proof}
For every $\left[u\right]\in\ns{\left(X\vee Y\right)}$, we have that
\begin{align*}
u\in\FIN\left(X\vee Y,p\right) & \iff\left[u\right]\sim_{X\vee Y}p\\
 & \iff\left(u\in\ns{X}\text{ and }u\sim_{X}\xi\right)\text{ or }\left(u\in\ns{Y}\text{ and }u\sim_{Y}\eta\right)\\
 & \iff u\in\FIN\left(X,\xi\right)\text{ or }u\in\FIN\left(Y,\eta\right),
\end{align*}
thereby $\INF\left(X\vee Y,p\right)=\INF\left(X,\xi\right)\sqcup\INF\left(Y,\eta\right)$.
\end{proof}
\begin{prop}
Let $\left(X,\xi\right)$ and $\left(Y,\eta\right)$ be pointed coarse
spaces. Then $\iota\left(X\vee Y,p\right)=\iota\left(X,\xi\right)\sqcup\iota\left(Y,\eta\right)$.
\end{prop}

\begin{proof}
Immediate from the above lemma.
\end{proof}
\begin{example}
The pointed real line $\left(\mathbb{R},0\right)$ is isomorphic to
the wedge sum $\left(\mathbb{R}_{\leq0},0\right)\vee\left(\mathbb{R}_{\geq0},0\right)$.
So $\iota\left(\mathbb{R},0\right)=\iota\left(\mathbb{R}_{\leq0},0\right)\sqcup\iota\left(\mathbb{R}_{\geq0},0\right)=\set{-\infty}\sqcup\set{+\infty}$.
\end{example}

\subsection{Non-scattering property}

Let $\left(X,\xi\right)$ be a pointed coarse space. Denote the set
of bounded sets of $X$ containing $\xi$ by $\mathcal{B}_{X}\left(\xi\right)$
and consider it as a partially ordered set with respect to the inclusion
$\subseteq$. A subset $A$ of $X$ is said to be \emph{$E$-connected}
for $E\in\mathcal{C}_{X}$ if $X\times X=\bigcup_{n\in\mathbb{N}}E^{n}$.
We say that $\left(X,\xi\right)$ is \emph{non-scattering at infinity}
if there exists an $E\in\mathcal{C}_{X}$ such that the set $\set{B\in\mathcal{B}_{X}\left(\xi\right)|X\setminus B\text{ is \ensuremath{E}-connected}}$
is cofinal in $\mathcal{B}_{X}\left(\xi\right)$, i.e. if for any
$A\in\mathcal{B}_{X}\left(\xi\right)$ there is a $B\supseteq A$
such that $X\setminus B$ is $E$-connected.
\begin{lem}
If a pointed coarse space $\left(X,\xi\right)$ is non-scattering
at infinity, then $\INF\left(X,\xi\right)$ is macrochain-connected.
\end{lem}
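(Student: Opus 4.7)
My plan is to reduce the claim to the transferred non-scattering hypothesis, applied not to a standard bounded set but to an internal one large enough to engulf all of $\mathrm{FIN}(X,\xi)$. Fix $E\in\mathcal{C}_{X}$ witnessing non-scattering at infinity and let $x,y\in\mathrm{INF}(X,\xi)$. Applying the hypothesis directly to a standard bounded $A\ni\xi$ would give, via transfer, an internal hyperfinite $\ns{E}$-chain in $\ns{X}\setminus\ns{A}$ joining $x$ and $y$; but such a chain is free to pass through points of $\mathrm{FIN}(X,\xi)\setminus\ns{A}$, which need not lie in $\mathrm{INF}(X,\xi)$. To circumvent this I want an internal bounded set $A^{*}$ that contains every standard $\ns{A}$ (hence the whole external set $\mathrm{FIN}(X,\xi)=\bigcup\{\ns{A}\mid A\ni\xi\text{ bounded}\}$) while still excluding the given points $x$ and $y$.

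Such an $A^{*}$ is produced by the same saturation trick used in the proof of \prettyref{prop:control}: realise the internal type on $A^{*}$ asserting (i) $\xi\in A^{*}$ and $A^{*}\times A^{*}\in\ns{\mathcal{C}_{X}}$, (ii) $\ns{A}\subseteq A^{*}$ for every standard bounded $A\ni\xi$, and (iii) $x,y\notin A^{*}$. Finite satisfiability is immediate: for any finitely many such $A_{1},\dots,A_{k}$ their union $A:=A_{1}\cup\cdots\cup A_{k}$ is again a standard bounded set containing $\xi$ (because the $A_{i}$ share $\xi$), and since $x,y$ are infinite they lie outside $\ns{A}$; hence $\ns{A}$ internally witnesses the finite fragment of the type. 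Enough saturation thus yields the desired $A^{*}$. Clause (ii) then gives $\mathrm{FIN}(X,\xi)\subseteq A^{*}$, so $\ns{X}\setminus A^{*}\subseteq\mathrm{INF}(X,\xi)$, while clause (iii) places both $x$ and $y$ in $\ns{X}\setminus A^{*}$.

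It remains to transfer: the assertion ``for every bounded $A\ni\xi$, $X\setminus A$ is $E$-connected'' transfers to ``for every internally bounded $A\ni\xi$, $\ns{X}\setminus A$ is $\ns{E}$-connected internally.'' Applying this with $A:=A^{*}$ furnishes an internal hyperfinite sequence $s_{0}=x,s_{1},\dots,s_{n}=y$ inside $\ns{X}\setminus A^{*}\subseteq\mathrm{INF}(X,\xi)$ with $(s_{i},s_{i+1})\in\ns{E}\subseteq{\sim_{X}}$ for every $i<n$, witnessing $x\equiv_{X,\xi}^{\iota}y$. The single genuine obstacle is the saturation step: one must be confident that the external set $\mathrm{FIN}(X,\xi)$ can simultaneously be engulfed by a single internal bounded $A^{*}$ and still be kept disjoint from the two prescribed infinite points. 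Once $A^{*}$ is in hand the non-scattering-plus-transfer portion is immediate.
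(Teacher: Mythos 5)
Your proof is correct and follows essentially the same route as the paper's: saturation produces an internal bounded set containing $\mathrm{FIN}\left(X,\xi\right)$ but omitting $x$ and $y$, and transfer of the non-scattering hypothesis then yields the required internal $\ns{E}$-chain inside $\mathrm{INF}\left(X,\xi\right)$. The only difference is that you spell out the finite-satisfiability check for the saturation step (using that a finite union of bounded sets sharing the point $\xi$ is bounded), which the paper leaves implicit.
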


\begin{proof}
Let $E$ be a witness of the non-scattering property of $\left(X,\xi\right)$.
Let $x,y\in\INF\left(X,\xi\right)$. For each $A\in\mathcal{B}_{X}\left(\xi\right)$
consider the set $\mathcal{F}_{A}$ of all $B\in\ns{\mathcal{B}_{X}}\left(\xi\right)$
such that $\ns{A}\subseteq B$, $x,y\notin B$, and $\ns{X}\setminus B$
is internally $\ns{E}$-connected. Then the family $\set{\mathcal{F}_{A}|A\in\mathcal{B}_{X}\left(\xi\right)}$
has the finite intersection property, i.e. any finite intersection
from that family is non-empty. Hence, by saturation, we can find a
$B\in\ns{\mathcal{B}_{X}}\left(\xi\right)$ such that $\FIN\left(X,\xi\right)\subseteq B$,
$x,y\notin B$, and $\ns{X}\setminus B$ is internally $\ns{E}$-connected.
(It can be taken as an element of the intersection $\bigcap_{A\in\mathcal{B}_{X}}\mathcal{F}_{A}$.)
There exists an internal hyperfinite sequence $\set{s_{i}}_{i\leq n}$
in $\ns{X}\setminus B$ such that $s_{0}=x$, $s_{n}=y$ and $\left(s_{i},s_{i+1}\right)\in\ns{E}$
for all $i<n$. Since $\ns{E}\subseteq{\sim_{X}}$ and $\ns{X}\setminus B\subseteq\INF\left(X,\xi\right)$,
the sequence $\set{s_{i}}_{i\leq n}$ is included in $\INF\left(X,\xi\right)$,
and satisfies that $s_{i}\sim_{X}s_{i+1}$ for all $i<n$. Hence $\INF\left(X,\xi\right)$
is macrochain-connected.
\end{proof}
\begin{prop}
If a pointed coarse space $\left(X,\xi\right)$ is non-scattering
at infinity, then $\iota\left(X,\xi\right)$ is an empty set or a
singleton.
\end{prop}

\begin{proof}
Immediate from the above lemma.
\end{proof}

\section{\label{sec:Comparison-with-sigma}Comparison with the invariant $\sigma$}

\subsection{Natural transformation $\omega\colon\sigma\to\iota$}

We first recall the invariant $\sigma\left(X,\xi\right)$ of a pointed
metric space $\left(X,\xi\right)$ defined in \cite{DLW11}. We adopt
a simplified but equivalent definition given in \cite{DLT13}.

A \emph{coarse sequence} on $X$ based at $\xi$ is a coarse map $s\colon\left(\mathbb{N},0\right)\to\left(X,\xi\right)$.
Obviously every coarse sequence tends from $\xi$ to infinity. We
denote by $S\left(X,\xi\right)$ the set of all coarse sequences on
$X$ based at $\xi$. Given two coarse sequences $s,t\in S\left(X,\xi\right)$,
we write $s\sqsubseteq t$ if $s$ is a subsequence of $t$, i.e.,
there is a strictly monotone function $\kappa\colon\mathbb{N}\to\mathbb{N}$
such that $s=t\circ\kappa$. We denote by $\equiv_{X,\xi}^{\sigma}$
the smallest equivalence relation on $S\left(X,\xi\right)$ that contains
$\sqsubseteq$. In other words, $s\equiv_{X,\xi}^{\sigma}t$ if and
only if $s$ and $t$ can be obtained from each other by taking (coarse)
subsequences and supersequences repeatedly. Finally define
\[
\sigma\left(X,\xi\right):=\set{\left[s\right]_{X,\xi}^{\sigma}|s\in S\left(X,\xi\right)},
\]
where $\left[s\right]_{X,\xi}^{\sigma}$ denotes the $\equiv_{X,\xi}^{\sigma}$-equivalence
class of $s$. The definition can be extended to arbitrary pointed
coarse spaces by simply replacing `metric' with `coarse'.

The definitions of $\sigma$ and $\iota$ look similar in the following
sense. The standard invariant $\sigma$ was constructed from (bornologous)
sequences tending to infinity. These sequences could be thought of
as \emph{dynamic} infinities in the sense that such infinities are
captured through the limit process. On the other hand, the nonstandard
invariant $\iota$ was constructed from nonstandardly infinite points.
These points could be thought of as \emph{static} infinities. In spite
of their similarity, $\sigma\left(X\right)$ and $\iota\left(X\right)$
do not coincide for some metric spaces as we shall see later.

In order to compare those two invariants, let us construct a natural
transformation from $\sigma$ to $\iota$. We first prove the following
lemma.
\begin{lem}
Let $\left(X,\xi\right)$ be a coarse space, $s,t\in S\left(X,\xi\right)$
and $i,j\in\prescript{\ast}{}{\mathbb{N}}\setminus\mathbb{N}$. Then
$\ns{s_{i}},\ns{t_{j}}$ belong to $\INF\left(X,\xi\right)$. If $s\equiv_{X,\xi}^{\sigma}t$,
then $\ns{s_{i}}\equiv_{X,\xi}^{\iota}\ns{t_{j}}$.
\end{lem}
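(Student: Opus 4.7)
The plan is to handle the two conclusions separately using the nonstandard characterisations already established. For the first claim, the key is that each coarse sequence $s\colon(\mathbb{N},0)\to(X,\xi)$ is, in particular, proper at the base point. Under the standard coarse structure on $\mathbb{N}$ one has $\mathrm{FIN}(\mathbb{N},0)=\mathbb{N}$ and hence $\mathrm{INF}(\mathbb{N},0)=\ns{\mathbb{N}}\setminus\mathbb{N}$, so any infinite $i\in\ns{\mathbb{N}}\setminus\mathbb{N}$ lies in $\mathrm{INF}(\mathbb{N},0)$. Applying \prettyref{lem:inf-to-inf} to $s$ then gives $\ns{s_{i}}\in\mathrm{INF}(X,\xi)$, and the same argument works for $\ns{t_{j}}$.

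For the equivalence part, the central intermediate claim is: for any single coarse sequence $u\in S(X,\xi)$ and any infinite $i,j\in\ns{\mathbb{N}}\setminus\mathbb{N}$, one has $\ns{u_{i}}\equiv_{X,\xi}^{\iota}\ns{u_{j}}$. Assuming without loss of generality $i\leq j$, I consider the internal hyperfinite sequence $k\mapsto\ns{u_{k}}$ for $k\in[i,j]\cap\ns{\mathbb{N}}$, which exists by transfer. Since $u$ is bornologous and the successor relation on $\mathbb{N}$ is controlled, \prettyref{prop:bornologous} combined with transfer gives $\ns{u_{k}}\sim_{X}\ns{u_{k+1}}$ for every such $k$. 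Every index $k\in[i,j]$ satisfies $k\geq i$ and is therefore infinite, so by the first paragraph each $\ns{u_{k}}$ lies in $\mathrm{INF}(X,\xi)$. Hence the chain is an internal macrochain inside $\mathrm{INF}(X,\xi)$ connecting $\ns{u_{i}}$ to $\ns{u_{j}}$, proving the claim.

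With this claim in hand, I unfold $s\equiv_{X,\xi}^{\sigma}t$ as a finite zig-zag $s=u_{0},u_{1},\ldots,u_{N}=t$ of coarse sequences with either $u_{k}\sqsubseteq u_{k+1}$ or $u_{k+1}\sqsubseteq u_{k}$ for each $k<N$. It therefore suffices to treat one elementary step, say $u\sqsubseteq v$ with $u=v\circ\kappa$ for some strictly monotone $\kappa\colon\mathbb{N}\to\mathbb{N}$. Since strict monotonicity implies $\kappa(n)\geq n$ for all $n\in\mathbb{N}$, transfer gives $\ns{\kappa}(i)\geq i$, so $\ns{\kappa}(i)\in\ns{\mathbb{N}}\setminus\mathbb{N}$. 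Then $\ns{u_{i}}=\ns{v_{\ns{\kappa}(i)}}$, and the key claim applied to $v$ yields $\ns{u_{i}}\equiv_{X,\xi}^{\iota}\ns{v_{j'}}$ for any infinite $j'$. Chaining such one-step comparisons along the zig-zag (picking any fixed infinite index at each intermediate stage, and finishing with the key claim applied to $t$) delivers $\ns{s_{i}}\equiv_{X,\xi}^{\iota}\ns{t_{j}}$.

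The only real technical point is ensuring that the internal chain used in the key claim stays inside $\mathrm{INF}(X,\xi)$; this is the whole reason for restricting to infinite indices $k\in[i,j]$, and it is precisely what makes the ``static'' invariant $\iota$ see the bornologous sequence as landing in a single macrochain-connected component. Once this observation is made, the rest is bookkeeping around the zig-zag definition of $\equiv_{X,\xi}^{\sigma}$ and the standard fact that $\ns{\kappa}$ preserves infiniteness.
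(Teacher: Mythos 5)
Your proposal is correct and follows essentially the same route as the paper: the first claim via properness of coarse sequences (the paper states the corresponding characterisation of divergent sequences directly, you derive it from Lemma \ref{lem:inf-to-inf}), and the second claim by reducing the zig-zag defining $\equiv_{X,\xi}^{\sigma}$ to a single subsequence step and connecting $\ns{s_{i}}=\ns{t_{\ns{\kappa}\left(i\right)}}$ to $\ns{t_{j}}$ by the internal hyperfinite chain of values of $t$ at infinite indices. You are somewhat more explicit than the paper on the points that matter (why $\ns{\kappa}\left(i\right)$ is infinite, why every intermediate index is infinite so the chain stays in $\mathrm{INF}\left(X,\xi\right)$, and why adjacent terms are finitely close), but the underlying argument is the same.
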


\begin{proof}
It is not difficult to prove that a sequence $u$ in $X$ tends (from
$\xi$) to infinity if and only if $\ns{u_{k}}\in\INF\left(X,\xi\right)$
holds for all $k\in\ns{\mathbb{N}}\setminus\mathbb{N}=\INF\left(\mathbb{N},0\right)$.
Hence $\ns{s_{i}},\ns{t_{j}}$ belong to $\INF\left(X,\xi\right)$.

To prove the latter part, we only need to prove the case $s\sqsubseteq t$,
because the general case can be reduced to that case by induction.
Choose a strictly monotone function $\kappa\colon\mathbb{N}\to\mathbb{N}$
such that $s=t\circ\kappa$. By transfer, this holds also at $i$,
i.e. $\ns{s_{i}}=\ns{t_{\ns{\kappa}\left(i\right)}}$. So $\ns{s_{i}}$
and $\ns{t_{j}}$ can be connected by an internal hyperfinite sequence
$\ns{t_{\ns{\kappa}\left(i\right)}},\ldots,\ns{t_{j}}$. Since $t$
is coarse, this sequence lies in $\INF\left(X,\xi\right)$ and any
two adjacent points are finitely close. Hence $\ns{s_{i}}\equiv_{X,\xi}^{\iota}\ns{t_{j}}$
holds.
\end{proof}
Thanks to this lemma, we can well-define a map $\omega_{\left(X,\xi\right)}\colon\sigma\left(X,\xi\right)\to\iota\left(X,\xi\right)$
by letting $\omega_{\left(X,\xi\right)}\left[s\right]_{X,\xi}^{\sigma}:=\left[\ns{s_{\omega}}\right]_{X,\xi}^{\iota}$,
where $\omega\in\prescript{\ast}{}{\mathbb{N}}\setminus\mathbb{N}$.
Clearly the map $\omega_{\left(X,\xi\right)}$ is natural in $\left(X,\xi\right)$,
i.e. it gives a natural transformation from $\sigma$ to $\iota$.

\subsection{Surjectivity for proper geodesic spaces}

We use the notation and terminology of nonstandard small-scale topology.
Recall that a metric space is said to be \emph{proper} if every closed
bounded subset is compact. A metric space is proper if and only if
every finite point is nearstandard \cite[Theorem 5.6 of Chapter 3]{Dav05}.
\begin{thm}
Let $X$ be a proper geodesic metric space and $\xi\in X$. The map
$\omega_{\left(X,\xi\right)}\colon\sigma\left(X,\xi\right)\to\iota\left(X,\xi\right)$
is surjective.
\end{thm}

\begin{proof}
Let $x\in\INF\left(X,\xi\right)$. Since $X$ is geodesic, there is
an internal isometry $\gamma\colon\left[0,\ns{d_{X}}\left(\xi,x\right)\right]\to\ns{X}$
such that $\gamma\left(0\right)=\xi$ and $\gamma\left(\ns{d_{X}}\left(\xi,x\right)\right)=x$
by transfer. For each $i\in\mathbb{N}$, since $\ns{d_{X}}\left(\xi,\gamma\left(i\right)\right)=i$
is finite, $\gamma\left(i\right)\in\FIN\left(X,\xi\right)$, so $\gamma\left(i\right)$
is nearstandard by properness. We can take a sequence $\set{s_{i}}_{i\in\mathbb{N}}$
in $X$ such that $s_{i}\approx_{X}\gamma\left(i\right)$, where $\approx$
denotes the infinitesimal closeness relation. Since $d_{X}\left(s_{i},s_{i+1}\right)\approx_{\mathbb{R}}\ns{d_{X}}\left(\gamma\left(i\right),\gamma\left(i+1\right)\right)=1$,
the sequence $\set{s_{i}}_{i\in\mathbb{N}}$ is bornologous. Moreover,
it is proper (i.e. tends to infinity), because $d_{X}\left(\xi,s_{i}\right)\approx_{\mathbb{R}}\ns{d_{X}}\left(\gamma\left(0\right),\gamma\left(i\right)\right)=i\to\infty$
as $i\to\infty$. Therefore $s\in S\left(X,\xi\right)$. Furthermore,
since $\ns{s_{i}}\approx_{X}\gamma\left(i\right)$ holds for all $i\in\mathbb{N}$,
it holds also for some $i\in\ns{\mathbb{N}}\setminus\mathbb{N}$ by
Robinson's lemma \cite[Theorem 4.3.10]{Rob66}. For such $i\in\ns{\mathbb{N}}\setminus\mathbb{N}$,
the internal hyperfinite sequence $\ns{s_{i}}\approx_{X}\gamma\left(i\right),\gamma\left(i+1\right),\ldots,\gamma\left(\left\lfloor \ns{d_{X}}\left(\xi,x\right)\right\rfloor \right),\gamma\left(\ns{d_{X}}\left(\xi,x\right)\right)=x$
witnesses that $\left[\ns{s_{i}}\right]_{X,\xi}^{\iota}=\left[x\right]_{X,\xi}^{\iota}$.
Therefore $\omega_{\left(X,\xi\right)}\left[s\right]_{X,\xi}^{\sigma}=\left[x\right]_{X,\xi}^{\iota}$.
\end{proof}

\subsection{Non-surjective examples}

The map $\omega_{\left(X,\xi\right)}\colon\sigma\left(X,\xi\right)\to\iota\left(X,\xi\right)$
is bijective for \emph{some} coarse spaces, e.g. the Euclidean spaces,
the ``vase'' spaces and the ``antenna'' space which are presented
in the previous section. It is however not bijective in general. The
following are examples where $\omega_{\left(X,\xi\right)}$ is not
surjective.
\begin{example}
\label{exa:Square-number-space}Consider the subspace $X:=\set{n^{2}|n\in\mathbb{N}}$
of the real line $\mathbb{R}$ with an arbitrary base point $\xi$.
This space has no coarse sequence. So $\sigma\left(X,\xi\right)$
is empty. On the other hand, since $X$ is unbounded, $\iota\left(X,\xi\right)$
is nonempty. More precisely, $\iota\left(X,\xi\right)=\set{\left[n^{2}\right]_{X,\xi}^{\iota}|n\in\ns{\mathbb{N}}\setminus\mathbb{N}}\cong\ns{\mathbb{N}}\setminus\mathbb{N}\cong\ns{\mathbb{N}}$.
Hence $\sigma\left(X,\xi\right)$ and $\iota\left(X,\xi\right)$ are
not equipotent. In this case, the map $\omega_{\left(X,\xi\right)}\colon\sigma\left(X,\xi\right)\to\iota\left(X,\xi\right)$
is trivially injective but not surjective.
\end{example}

\begin{example}
Recall the coarse space $X$ of \prettyref{exa:Uncountable-basis}.
It is clear that every countable subset of $X$ is bounded, and vice
versa. Hence $X$ has no divergent \emph{sequence}, although it has
a divergent \emph{net}. Thus $\sigma\left(X,\xi\right)=\varnothing$
but $\iota\left(X,\xi\right)\neq\varnothing$ for any base point $\xi\in X$.
\end{example}

\begin{example}[{cf. DeLyser \emph{et al.} \cite[pp. 11--12]{DLW11}}]
\label{exa:Open-books}Consider two metric spaces, called the open
book and the discrete open book (\prettyref{fig:open-books}).
\begin{figure}
\subfloat[The open book.]{\begin{tikzpicture}
	\foreach \n in {-4, -2, 0, 2, 4}
		\draw (0, 0) -- (\n, 4);
\end{tikzpicture}}

\subfloat[The discrete open book.]{\begin{tikzpicture}
	\fill (0, 0) circle[radius=0.5pt];
	\foreach \n in {1, ..., 120}
		\fill ({- \n / 30}, {\n / 30}) circle[radius=0.5pt];
	\foreach \n in {1, ..., 60}
		\fill ({- \n / 30}, {\n / 15}) circle[radius=0.5pt];
	\foreach \n in {1, ..., 40}
		\fill (0, {\n / 10}) circle[radius=0.5pt];
	\foreach \n in {1, ..., 30}
		\fill ({\n / 15}, {\n / 7.5}) circle[radius=0.5pt];
	\foreach \n in {1, ..., 24}
		\fill ({\n / 6}, {\n / 6}) circle[radius=0.5pt];
\end{tikzpicture}}\caption{\label{fig:open-books}The first $5$ pages of the books.}
\end{figure}
 The open book $B$ is the wedge sum of countable copies of the ray
$\mathbb{R}_{\geq0}$ at the origin $0$. Let $s^{i}$ be the coarse
sequence $0,1,2,\ldots$ on the $i$-th ``page'' $\left[0,+\infty\right)$
of $B$. Then,
\[
\sigma\left(B,0\right)=\Set{\left[s^{i}\right]_{B,0}^{\sigma}|i=1,2,\ldots}.
\]
On the other hand, the discrete open book $D$ is the wedge sum of
$i\mathbb{N}:=\set{in|n\in\mathbb{N}}\ \left(i=1,2,\ldots\right)$
at the origin $0$, which is a subspace of $B$. Let $t^{i}$ be the
coarse sequence $0,i,2i,\ldots$ on the $i$-th page $i\mathbb{N}$
of $D$. Then,
\[
\sigma\left(D,0\right)=\Set{\left[t^{i}\right]_{D,0}^{\sigma}|i=1,2,\ldots}.
\]
Thus $\sigma\left(B,0\right)\cong\sigma\left(D,0\right)\cong\mathbb{N}$.

Let us compute the nonstandard invariants $\iota$ of the two books
$B$ and $D$. In both books, any two infinite points on different
pages are inequivalent, but any two infinite points on the same finite-numbered
page are equivalent. In the open book $B$, the latter remains true
on infinite-numbered pages. Hence 
\[
\iota\left(B,0\right)=\Set{\left[\ns{s}_{\omega}^{i}\right]_{B,0}^{\iota}|i\in\ns{\mathbb{N}_{>0}}},
\]
where $\omega\in\ns{\mathbb{N}}\setminus\mathbb{N}$. In the discrete
open book $D$, any two different points on infinite-numbered pages
are inequivalent even when they are on the same page. Hence 
\[
\iota\left(D,0\right)=\Set{\left[\ns{t}_{\omega}^{i}\right]_{D,0}^{\iota}|i\in\mathbb{N}_{>0}}\cup\Set{\left[\ns{t}_{j}^{i}\right]_{D,0}^{\iota}|i\in\ns{\mathbb{N}}\setminus\mathbb{N},j\in\ns{\mathbb{N}_{>0}}},
\]
where $\omega\in\ns{\mathbb{N}}\setminus\mathbb{N}$. Both $\iota\left(D,0\right)$
and $\iota\left(B,0\right)$ are equipotent to $\prescript{\ast}{}{\mathbb{N}}$,
i.e., $\iota\left(B,0\right)\cong\iota\left(D,0\right)\cong\ns{\mathbb{N}}$.

Now consider the following diagram:
\[
\xymatrix{\sigma\left(D,0\right)\ar[d]^{\omega_{\left(D,0\right)}}\ar[rr]^{\sigma\left(D\hookrightarrow B\right)} &  & \sigma\left(B,0\right)\ar[d]^{\omega_{\left(B,0\right)}}\\
\iota\left(D,0\right)\ar[rr]^{\iota\left(D\hookrightarrow B\right)} &  & \iota\left(B,0\right)
}
\]
Both of the vertical maps are injective, but neither is surjective.
The upper horizontal map is bijective, because it sends $\left[t^{i}\right]_{D,0}^{\sigma}$
to $\left[s^{i}\right]_{B,0}^{\sigma}$ for all $i\in\mathbb{N}$.
On the other hand, the lower horizontal map is surjective but not
injective, because it sends $\left[\ns{t}_{j}^{i}\right]_{D,0}^{\iota}$
to $\left[\ns{s}_{\omega}^{i}\right]_{B,0}^{\iota}$ for all $i\in\ns{\mathbb{N}}\setminus\mathbb{N}$
and $j\in\ns{\mathbb{N}_{>0}}$.
\end{example}

\section{\label{sec:Some-open-problems}Some open problems}

It is easy to construct, for each $n\in\mathbb{N}$, a pointed coarse
space $\left(X_{n},\xi_{n}\right)$ such that $\iota\left(X_{n},\xi_{n}\right)$
is of cardinality $n$ (e.g. the wedge sum of $n$ copies of the ray
$\mathbb{R}_{\geq0}$ at $0$). We also have a pointed coarse space
$\left(Y,\eta\right)$ such that $\iota\left(Y,\eta\right)$ is equipotent
to the hypernatural numbers $\ns{\mathbb{N}}$ (e.g. \prettyref{exa:Square-number-space}
and \prettyref{exa:Open-books}). The cardinality of $\ns{\mathbb{N}}$
is uncountable, because each function $f\colon\mathbb{N}\to\set{0,1}$
can be coded by $\sum_{i=0}^{\omega}2^{i}\ns{f}\left(i\right)\in\ns{\mathbb{N}}$,
where $\omega\in\prescript{\ast}{}{\mathbb{N}}\setminus\mathbb{N}$.
\begin{rem}
The cardinality of $\ns{\mathbb{N}}$ is greater than that of any
standard set by weak saturation (i.e. the enlargement property). Moreover,
if the nonstandard universe is $\kappa$-saturated, the cardinality
of $\ns{\mathbb{N}}$ is at least $\kappa$. Thus the cardinality
of $\ns{\mathbb{N}}$ depends on the choice of the standard and nonstandard
universes.
\end{rem}

\begin{problem}
Does there exist a pointed coarse space $\left(X,\xi\right)$ such
that $\iota\left(X,\xi\right)$ is countably infinite?
\end{problem}

We have compared two invariants $\sigma\left(X,\xi\right)$ and $\iota\left(X,\xi\right)$
through the natural transformation $\omega_{\left(X,\xi\right)}\colon\sigma\left(X,\xi\right)\to\iota\left(X,\xi\right)$.
As already mentioned, this map is sometimes bijective, and is sometimes
injective but not surjective. The following questions then arise.
\begin{problem}
Does there exist a pointed coarse space $\left(X,\xi\right)$ such
that the map $\omega_{\left(X,\xi\right)}$ is not injective?
\end{problem}

\begin{problem}
For what kind of pointed coarse spaces is the map $\omega_{\left(X,\xi\right)}$
injective, surjective, bijective?
\end{problem}

The invariant $\sigma\left(X,\xi\right)$ only deals with standard
\emph{bornologous} sequences, so it fails to capture infinite points
that cannot be reached in a standard bornologous way. On the other
hand, the invariant $\iota\left(X,\xi\right)$ deals with arbitrary
infinite points. So $\iota\left(X,\xi\right)$ may have richer elements
compared with $\sigma\left(X,\xi\right)$. \prettyref{exa:Square-number-space}
and \prettyref{exa:Open-books} exemplify such circumstances. Thus
those two invariants may focus on quite different properties of spaces.
In fact, the surjectivity of $\omega_{\left(X,\xi\right)}$ can be
recovered by reducing the elements of $\mathrm{INF}\left(X,\xi\right)$.
Consider the following subset of $\INF\left(X,\xi\right)$:
\[
\INF^{b}\left(X,\xi\right):=\set{x\in\INF\left(X,\xi\right)|\exists s\in S\left(X,\xi\right)\exists\omega\in\ns{\mathbb{N}}\left(x=\ns{s}_{\omega}\right)}.
\]
Then define an invariant $\iota^{b}\left(X,\xi\right)$ as the set
of all macrochain-connected components of $\INF^{b}\left(X,\xi\right)$.
One can define a natural map $\omega_{\left(X,\xi\right)}^{b}\colon\sigma\left(X,\xi\right)\to\iota^{b}\left(X,\xi\right)$
in the same way as $\omega_{\left(X,\xi\right)}$. Obviously $\omega_{\left(X,\xi\right)}^{b}$
is surjective. If $\omega_{\left(X,\xi\right)}$ is injective, then
so is $\omega_{\left(X,\xi\right)}^{b}$, because each macrochain-connected
component of $\INF^{b}\left(X,\xi\right)$ is contained in some macrochain-connected
component of $\INF\left(X,\xi\right)$. However, the injectivity of
$\omega_{\left(X,\xi\right)}^{b}$ is still non-trivial.
\begin{problem}
For what kind of pointed coarse spaces is the map $\omega_{\left(X,\xi\right)}^{b}$
injective (and hence bijective)?
\end{problem}

Because the invariant $\sigma\left(X,\xi\right)$ is made out of coarse
\emph{sequences}, it may be well-behaved only for coarse spaces with
countable bases. The existence of countable bases is equivalent to
metrisability provided that the metric function is allowed to take
the value $+\infty$ (see \cite[Theorem 2.55]{Roe03}).
\begin{problem}
Find a more appropriate definition of the invariant $\sigma\left(X,\xi\right)$
for non-metrisable spaces.
\end{problem}

The author in \cite{Ima16} constructed a nonstandard homology theory
for uniform spaces based on hyperfinite formal sums of \emph{infinitesimally
small} simplices.\footnote{To be fair, this idea is due to McCord \cite{McC72}, who developed
a nonstandard homology theory of topological spaces.} Similarly, one can construct a nonstandard homology theory for coarse
spaces based on hyperfinite formal sums of \emph{finitely large} simplices.
Then $\iota\left(X,\xi\right)$ can be regarded as a ``basis'' of
the $0$-th homology group of $\INF\left(X,\xi\right)$. Note that
the set $\iota\left(X,\xi\right)$ is NOT a basis in the usual sense.
Strictly speaking, the $0$-th homology group of $\INF\left(X,\xi\right)$
consists of hyperfinite formal sums $\sum_{i}g_{i}\sigma_{i}$, where
$g_{i}$ are elements of the coefficient group and $\sigma_{i}=\left[x_{i}\right]_{X,\xi}^{\iota}$
are elements of $\iota\left(X,\xi\right)$ such that the sequence
$\set{\left(g_{i},x_{i}\right)}_{i}$ is internal.
\begin{problem}
Can we interpret the invariant $\sigma\left(X,\xi\right)$ homology-theoretically?
\end{problem}

\bibliographystyle{amsplain}
\bibliography{\string"A nonstandard invariant of coarse spaces\string"}

\end{document}